\documentclass{amsart}
\usepackage{graphics}
\usepackage{ae}
\usepackage{amsmath,bbm}

\parindent = 0cm

\usepackage{amssymb}

\def\L{{\mathcal L}}

\def\1{{\mathbf 1}}

\numberwithin{equation}{section}

\newtheorem{theo}{Theorem}
\newtheorem{prop}[theo]{Proposition}

\newtheorem{coro}[theo]{Corollary}
\newtheorem{lemma}[theo]{Lemma}
\theoremstyle{definition}

\begin{document}

\title{Decidability of the HD$\mathbf{0}$L ultimate periodicity problem}
\author{Fabien Durand}
\address[F.D.]{\newline
Universit\'e de Picardie Jules Verne\newline
Laboratoire Ami\'enois de Math\'ematiques Fondamentales et
Appliqu\'ees\newline
CNRS-UMR 7352\newline
33 rue Saint Leu\newline
80039 Amiens Cedex 01\newline
France.}
\email{fabien.durand@u-picardie.fr}

\begin{abstract}
In this paper we prove the decidability of the HD$0$L ultimate periodicity problem.
\end{abstract}

\maketitle

\section{Introduction}

\subsection{The HD$\mathbf{0}$L ultimate periodicity problem}

In this paper we prove the decidability of the following problem :

\medskip

{\bf Input:}
Two finite alphabets $A$ and $B$, an endomorphism $\sigma : A^* \to A^*$, a word $w\in A^*$ and a morphism $\phi : A^* \to B^*$.

{\bf Question:}
Do there exist two words $u$ and $v$ in $B^*$, with $v$ non-empty, such that the sequence $(\phi (\sigma^n (w )))_n$ converges to $uv^\omega$ ({\em i.e.}, is ultimately periodic)?

\medskip

(The convergence of the sequence $(\phi (\sigma^n (w )))_n$ meaning that $(|\phi (\sigma^n (w ))|)_n$ goes to $+\infty$ and that $(\phi (\sigma^n (www\cdots )))_n$ converges in $B^\mathbb{N}$ endowed with the usual product topology.)
We will refer to it as the {\em HD$0$L ultimate periodicity problem}.
Observe that it is slightly more general than the classical statement where it is assumed in the input that the sequence $(\phi (\sigma^n (w )))_n$ converges.

\begin{theo}
\label{theo:main}
The HD0L ultimate periodicity problem is decidable.
\end{theo}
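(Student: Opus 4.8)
The plan is to reduce the instance, by a chain of effective steps, to the single question ``is a given morphic word ultimately periodic?'', and then to settle that by combining the Morse--Hedlund theorem with a \emph{computable} bound on the eventual period. First one decides whether $(\phi(\sigma^n(w)))_n$ converges at all: whether $|\phi(\sigma^n(w))|\to+\infty$ is a routine computation with the incidence matrix of $\sigma$ and the letter multiplicities of $\phi$, and whether $(\phi(\sigma^n(www\cdots)))_n$ converges in $B^{\mathbb N}$ is an elementary check on how $\phi(\sigma^n(w))$ sits inside $\phi(\sigma^{n+1}(w))$ for large $n$; if it does not converge, the answer is ``no''. Otherwise it converges to an infinite word $\y$, and by a standard effective reduction we may assume $w=a$ is a single letter with $\sigma(a)\in aA^+$ and $|\sigma^n(a)|\to\infty$, so that $\x:=\lim_n\sigma^n(a)$ is a fixed point of $\sigma$ and $\y=\phi(\x)$. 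By Cobham's theorem we may take $\phi$ to be a coding (letter-to-letter, hence non-erasing), and by a further standard effective reduction we may take $\sigma$ non-erasing and assume every letter of $A$ occurs in $\x$. It then remains to decide whether the morphic word $\y$ is ultimately periodic.

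By the Morse--Hedlund theorem, $\y$ is ultimately periodic iff its factor complexity is bounded, so the heart of the matter is to produce a \emph{computable} integer $N=N(\sigma,\phi,w)$ such that, whenever $\y$ is ultimately periodic, $\y=uv^\omega$ for some $u\in B^*$ and $v\in B^+$ with $|uv|\le N$. Granting such an $N$, the decision procedure enumerates all such pairs $(u,v)$ and, for each, tests whether $\y=uv^\omega$; since $uv^\omega$ is itself an HD$0$L sequence and the HD$0$L sequence equivalence problem is known to be decidable, every test terminates, and the answer to Theorem~\ref{theo:main} is ``yes'' exactly when some test succeeds.

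To manufacture $N$ --- and this is the step I expect to be the genuine obstacle --- I would analyse $\x$ through its growing structure and its return words. When $\sigma$ is primitive (a decidable condition), $\x$ is linearly recurrent, hence so is $\y$ (the coding being non-erasing), so if $\y$ is ultimately periodic it is in fact purely periodic; moreover the family of derivated sequences of $\y$ with respect to its prefixes is finite and effectively bounded, and periodicity forces one of these finitely many derivated sequences to reduce to a single return word whose length controls the period, which fixes $N$. The non-primitive case is brought back to the primitive one by a descent along the strongly connected components of the dependency graph of $\sigma$: up to a prefix of controlled length $\y$ decomposes into pieces each of which is the image of a primitive-substitutive word, and a period bound for each piece yields one for $\y$. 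The reason this is harder than the preliminary reductions is twofold: the coding $\phi$ can identify genuinely distinct substitutive patterns, so $N$ must reflect the interaction between the inflation by $\sigma$ and the projection by $\phi$ rather than depending on $\sigma$ alone; and outside the primitive case there is no single minimal subshift in which to anchor the return-word estimates, which is precisely what makes the component descent necessary and delicate.
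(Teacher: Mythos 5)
Your first reduction step is a genuine gap. You claim that deciding whether $(\phi(\sigma^n(www\cdots)))_n$ converges is ``an elementary check on how $\phi(\sigma^n(w))$ sits inside $\phi(\sigma^{n+1}(w))$ for large $n$''. It is not: $\phi(\sigma^n(w))$ need not be a prefix of (or comparable with) $\phi(\sigma^{n+1}(w))$, as the paper's own example shows ($\sigma(a)=cb$, $\sigma(b)=ba$, $\sigma(c)=ab$, $\phi(a)=\phi(c)=0$, $\phi(b)=1$: the iterates $\sigma^n(a)$ diverge while their images converge to Thue--Morse), and the paper explicitly states that deciding the existence of the limit is left as an \emph{open problem}. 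The paper never decides convergence; instead, after reducing $w$ to a single letter via Lemma \ref{lemma:tendsto}, it finds computable $j_0,n_0$ so that along the arithmetic progressions $j_0+i+kn_0$ the images $\phi(\sigma^{j_0+i+kn_0}(a_i))$ converge to substitutive sequences, decides ultimate periodicity of each of these finitely many accumulation points with explicit $u_i,v_i$ (Theorem \ref{theo:perforsub}), and then just compares the explicit words $u_iv_i^\omega$; ``converges to an ultimately periodic limit'' is thereby decided without ever deciding convergence. Your decomposition (first decide convergence, then periodicity of the limit) rests on an unproved and possibly very hard sub-problem. A related soft spot: your final test invokes ``decidability of the HD$0$L sequence equivalence problem'' as a known black box, which it is not; the paper instead checks equality with an explicit $uv^\omega$ through recurrent factors (Lemma \ref{lemma:suboflengthn} plus Lemma \ref{lemma:finalcheck}).

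The central step you yourself flag as the obstacle --- a computable bound $N$ --- is also where your sketch diverges from what actually has to be done. Outside the primitive case, the descent on strongly connected components does \emph{not} present the sequence as a concatenation of images of primitive-substitutive pieces: non-growing letters may be interleaved throughout, and the paper needs Pansiot's dichotomy (Lemma \ref{lemma:pansiot}) --- either the blocks of non-growing letters are bounded and the sequence is effectively rewritten over a growing substitution, or some growing letter $b$ has $\sigma(b)=vbu$ with $u$ non-growing and non-empty, which yields an explicit candidate period $u'$ directly. Even in the growing case, bounding the period of each primitive component is not enough: one must check, via the computed word periods $w(\sigma_i)$, Lemma \ref{lemma:langequal} and Lemma \ref{lemma:subper}, that all components generate the \emph{same} periodic language, and then run the global test of Lemma \ref{lemma:finalcheck} against the computable recurrent factors of the fixed point. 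Finally, the passage to a coding and a non-erasing substitution is not merely ``standard'': the Cassaigne--Nicolas proof contains non-algorithmic steps, and the paper (as Honkala did earlier) must make it effective, which is the purpose of the reductions (P1)--(P4). So your outline points in a reasonable direction (primitive case via linear recurrence/return words, component analysis otherwise), but the steps you treat as routine are precisely where the work lies, and the convergence step as written is wrong in approach and would need the paper's accumulation-point argument to repair.
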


This result was announced in \cite{Durand:2012}.
While we were ending the writing of this paper, I. Mitrofanov put on Arxiv \cite{Mitrofanov:preprint2011} another solution of this problem. 

This problem was open for about 30 years.

In 1986, positive answers were given independently for D$0$L systems (or purely substitutive sequences) in both \cite{Harju&Linna:1986} and \cite{Pansiot:1986}, and, for automatic sequences (which are particular HD$0$L sequences) in \cite{Honkala:1986}.
Other proofs have been given for the D$0$L case in \cite{Honkala:2008} and for automatic sequences in \cite{Allouche&Rampersad&Shallit:2009}.

Recently in \cite{Durand:2012} the primitive case has been solved.

In \cite{Honkala&Rigo:2004} is given an equivalent statement of the HD$0$L ultimate periodicity problem in terms of recognizable sets of integers and abstract numeration systems.
In fact, J. Honkala already gave a positive answer to this question in \cite{Honkala:1986} but in the restricted case of the usual integer bases,  {\em i.e.}, for $k$-automatic sequences or
constant length substitutive sequences. Recently, in \cite{Bell&Charlier&Fraenkel&Rigo:2009}, a positive answer has been given for a (large) class of numeration systems including for instance the Fibonacci numeration system.

Let us point out that the characterization of recognizable sets of integers for abstract numeration systems in terms of substitutions given in \cite{Maes&Rigo:2002} (see also \cite{Lecomte&Rigo:2010}), together with Theorem \ref{theo:main}, provides a decision procedure to test whether a recognizable set of integers in some abstract numeration system is a finite union of arithmetic progressions.

\subsection{Organization of the paper}
\label{subsec:organization}
In Section \ref{section2} are the classical definitions.

In Section \ref{section3} we prove the HD$0$L ultimate periodicity problem for substitutive sequences.
These sequences are such that $(\sigma^n (w))_n$ converges.
This avoids to test the existence of the limit.
Indeed, there are examples where $(\phi (\sigma^n (w)))_n$ converges and $(\sigma^n (w))_n$ does not:
for $\sigma$ and $\phi$, defined by $\sigma (a) = cb$, $\sigma (b) =ba$, $\sigma (c) = ab$, $\phi (a) = \phi (c) = 0$ and $\phi (b) = 1$, the sequence $(\sigma^n (a))_n$  does not converge but $(\phi (\sigma^n (a)))_n$ does (to the Thue-Morse sequence).

Under these assumptions the proof could be sketched as follows. 
First we recall some primitivity arguments about matrices and substitutions.
The ''best or easiest situation'' is when we deal with growing substitutions and codings (letter-to-letter morphisms).
It is known that we can always consider we are working with codings (see \cite{Cobham:1968,Pansiot:1983,Allouche&Shallit:2003,Cassaigne&Nicolas:2003}).
In \cite{Honkala:2009} it is shown this can be algorithmically realized.
We propose a different algorithm using the proof of \cite{Cassaigne&Nicolas:2003} where we replace some (non-algorithmic) arguments (Lemma 2, Lemma 3 and Lemma 4 of this paper) by algorithmic ones.

We treat separately growing and non-growing substitutions.
For growing substitutions we look at their primitive components and we use the decidability result established in \cite{Durand:2012} about periodicity for primitive substitutions. Indeed, these primitive components should generate periodic sequences. Hence, we check it is the case (if not, then the sequence is not ultimately periodic). 
From there, Lemma \ref{lemma:finalcheck} allows us to conclude.

For the non-growing case we use a result of Pansiot \cite{Pansiot:1984} saying that we can either consider we are in the growing case or there are longer and longer periodic words with the same period in the sequence. 
We again conclude with Lemma \ref{lemma:finalcheck}.

In Section \ref{section4} we show how to use the substitutive case to solve the general HD$0$L case.
This concludes the proof of Theorem \ref{theo:main}.

\subsection{Questions and comments}
 
We did not compute the complexity of the algorithm provided by our proof of the HD$0$L ultimate periodicity problem. 
Looking at Proposition \ref{prop:decomprim} and the results in \cite{Durand:2012} we use here, our approach provides a high complexity. 

Our result is for one-dimensional sequences.
What can be said about multidimensional sequences generated by substitution rules ? 
or self-similar tilings ? 
It seems hopeless to generalize our method to tilings,
although the main and key result we use to solve the HD$0$L ultimate periodicity problem (that is, the main result in \cite{Durand:1998}, see \cite{Durand:2012}) has been generalized to higher dimensions by N. Priebe in \cite{Priebe:2000} (see also \cite{Priebe&Solomyak:2001}).
But observe that in \cite{Leroux:2005} the author gives a polynomial time algorithm to know whether or not a Number Decision Diagram defines a Presburger definable set (see also \cite{Muchnik:2003} where it was first proven but with a much higher complexity). 
From this result and \cite{Cerny&Gruska:1986,Salon:1986,Salon:1987} it is decidable to know whether a multidimensional automatic sequence (or fixed point of a multidimensional "uniform" substitution) has a certain type of periodicity (see \cite{Leroux:2005,Muchnik:2003}). 
From \cite{Durand&Rigo:2013} this type of periodicity is equivalent to a block complexity condition.

\section{Words, morphisms, substitutive and HD$0$L sequences}
\label{section2}

In this section we recall classical definitions and notation.
Observe that the notion of substitution we use below could be slightly different from other definitions in the literature.

\subsection{Words and sequences}
 
An {\it alphabet} $A$ is a finite set of elements called {\it
  letters}.
Its cardinality is $|A|$.
A {\it word} over $A$ is an element of the free monoid
generated by $A$, denoted by $A^*$. 
Let $x = x_0x_1 \cdots x_{n-1}$
(with $x_i\in A$, $0\leq i\leq n-1$) be a word, its {\it length} is
$n$ and is denoted by $|x|$. 
The {\it empty word} is denoted by $\epsilon$, $|\epsilon| = 0$. 
The set of non-empty words over $A$ is denoted by $A^+$. 
The elements of $A^{\mathbb{N}}$ are called {\it sequences}. 
If $x=x_0x_1\cdots$ is a sequence (with $x_i\in A$, $i\in \mathbb{N}$) and $I=[k,l]$ an interval of
$\mathbb{N}$ we set $x_I = x_k x_{k+1}\cdots x_{l}$ and we say that $x_{I}$
is a {\it factor} of $x$.  If $k = 0$, we say that $x_{I}$ is a {\it
 prefix} of $x$. 
The set of factors of length $n$ of $x$ is written
$\mathcal{L}_n(x)$ and the set of factors of $x$, or the {\it language} of $x$,
is denoted by $\mathcal{L}(x)$. 
The {\it occurrences} in $x$ of a word $u$ are the
integers $i$ such that $x_{[i,i + |u| - 1]}= u$. 
If $u$ has an occurrence in $x$, we also say that $u$ {\em appears} in $x$.
When $x$ is a word,
we use the same terminology with similar definitions.

The sequence $x$ is {\it ultimately periodic} if there exist a word
$u$ and a non-empty word $v$ such that $x=uv^{\omega}$, where
$v^{\omega}= vvv\cdots $.
In this case $v$ is called a {\em word period} and $|v|$ is called a {\em length period} of $x$. 
It is {\it periodic} if $u$ is the empty word. 
A word $u$ is {\em recurrent} in $x$ if it appears in $x$ infinitely many times.
The sequence $x$ is {\it uniformly recurrent} if all words in its language appear infinitely many times in $x$ and with bounded gaps. 
 
\subsection{Morphisms and matrices} 
Let $A$ and $B$ be two alphabets. Let $\sigma$ be a {\it morphism} 
from $A^*$ to $B^*$. 
When $\sigma (A) \subset B$, we say $\sigma$ is a {\em coding}. 
We say $\sigma$ is
{\it erasing} if there exists $b\in A$ such that $\sigma (b)$ is the
empty word.  
Such a letter is called {\em erasing letter} (w.r.t. $\sigma$).   
If $\sigma (A)$ is included in $B^+$, it induces by concatenation a map
from $A^{\mathbb{N}}$ to $B^{\mathbb{N}}$. 
This map is also denoted by $\sigma$.
With the morphism $\sigma$ is naturally associated its {\em incidence matrix} $M_{\sigma} =
(m_{i,j})_{i\in B , j \in A }$ where $m_{i,j}$ is the number of
occurrences of $i$ in the word $\sigma(j)$.

Let $\sigma $ be an endomorphism.
We say it is {\em primitive} whenever its incidence matrix is primitive ({\em i.e.}, when it has a power with positive coefficients).  
We denote by $\L (\sigma )$ the set of words having an occurrence in some image of $\sigma^n$ for some $n\in \mathbb{N}$.
We call it the language of $\sigma$.

\subsection{Substitutions and substitutive sequences}

We say that an endomorphism $\sigma : A^* \rightarrow A^{*}$ is {\em prolongable on $a\in A$} if there exists a word $u\in A^+$ such that $\sigma(a)=au$ and, moreover, if $\lim_{n\to+\infty}|\sigma^n(a)|=+\infty$.
Prolongable endomorphisms are called {\em substitutions}. 

We say a letter $b\in A$ is {\em growing} (w.r.t. $\sigma$) if $\lim_{n\to+\infty}|\sigma^n(b)|=+\infty$.
We say $\sigma$ is {\em growing} whenever all letters of $A$ are growing.  

Since for all $n\in\mathbb{N}$, $\sigma^n(a)$ is a prefix of $\sigma^{n+1}(a)$ and because
     $(|\sigma^n(a)|)_n$ tends to infinity with $n$, the
     sequence $(\sigma^n(aaa\cdots ))_{n}$ converges (for the usual
     product topology on $A^\mathbb{N}$) to a sequence denoted by
     $\sigma^\omega(a)$. 
The endomorphism $\sigma$ being continuous for the product topology, $\sigma^\omega(a)$ is a fixed point of $\sigma$: $\sigma (\sigma^\omega(a)) = \sigma^\omega(a)$.
A sequence obtained in
    this way (by iterating a prolongable substitution) is said to be {\em purely substitutive} (w.r.t. $\sigma$). 
If $x\in
    A^\mathbb{N}$ is purely substitutive and $\phi:A^*\to B^*$ is a morphism then the sequence $y=\phi (x)$ is said to be a {\em morphic sequence} (w.r.t. $(\sigma , \phi )$). 
When $\phi $ is a coding, we say $y$ is {\em substitutive} (w.r.t. $(\sigma , \phi$)). 
In these cases, when $\sigma$ is primitive, $y$ is uniformly recurrent (see \cite{Queffelec:1987}).

\subsection{D$0$L and HD$0$L sequences}

A {\em D$0$L system} is a triple $G=(A,\sigma ,u)$ where $A$ is a finite alphabet, $\sigma : A^* \to A^*$ is an endomorphism and $u$ is a word in $A^*$.
An {\em HD$0$L system} is a 5-tuple $G=(A,B,\sigma , \phi ,u)$ where $(A,\sigma ,u)$ is a D$0$L system, $B$ is a finite alphabet and  $\phi : A^* \to B^*$ is a morphism.
If it converges, the limit of $(\phi (\sigma^n (uuu\cdots ))_n$ is called {\em HD$0$L sequence}. 

It is clear that substitutive sequences are HD$0$L sequences.
We will show in the last section that HD$0$L sequences are substitutive sequences.
Nevertheless, as the initial data are not the same, it is not enough to solve the ultimate periodicity problem for substitutive sequences. 
Indeed, if $(\sigma^n (uuu\cdots ))_n$ does not converge it seems difficult to decide whether $(\phi (\sigma^n (uuu\cdots ))_n$ converges. We leave this question as an open problem.

\section{Ultimate periodicity of substitutive sequences}
\label{section3}

In this section we prove the decidability of the HD$0$L ultimate periodicity problem for substitutive sequences.

In the sequel $\sigma : A^*\to A^*$ is a substitution prolongable on $a$, $\phi : A^* \to B^*$ is a morphism, $y=\sigma^\omega  (a)$ and $x=\phi (y)$ is a sequence of  $B^\mathbb{N}$.
We have to find an algorithm deciding whether $x$ is ultimately periodic or not.

\subsection{Primitivity assumption and sub-substitutions}

We recall that the HD$0$L ultimate periodicity problem is already solved in the primitive case.

\begin{theo}
\cite{Durand:2012}
\label{th:decidprim}
The HD0L ultimate periodicity problem is decidable in the context of primitive substitutions.
Moreover, a word period can be explicitly computed.
\end{theo}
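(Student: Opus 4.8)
The plan is to reduce the problem to deciding \emph{periodicity} of $x$, and then to detect periodicity through the return words of the prefixes of $x$, using the effective content of the characterization of primitive substitutive sequences by return words from \cite{Durand:1998}.

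First I would reduce to the case where $\phi$ is a coding. By the standard constructions recalled later in this paper, made effective in \cite{Honkala:2009} (see also \cite{Cassaigne&Nicolas:2003}), one computes a primitive substitution $\sigma'$ prolongable on a letter $a'$ and a coding $\phi'$ with $\phi'({\sigma'}^{\omega}(a')) = x$, except when $\phi$ erases all of $A$, in which case $x$ is the empty word and the answer is negative. Since $\sigma'$ is primitive, ${\sigma'}^{\omega}(a')$ is uniformly recurrent, and hence so is $x$. A uniformly recurrent sequence is ultimately periodic if and only if it is periodic, so it now suffices to decide whether $x$ is periodic and, if so, to produce a word period.

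Next I would translate periodicity into a statement about return words. For a non-empty prefix $u$ of the uniformly recurrent sequence $x$, the set $\R_{x}(u)$ of its return words in $x$ is finite and non-empty, and reading off the successive return words along $x$ yields the derived sequence $\D_{u}(x)$ over the alphabet $\R_{x}(u)$. The elementary observation is that $x$ is periodic if and only if $\R_{x}(u)$ is a singleton for some prefix $u$: if $\R_{x}(u)=\{v\}$ then $x=v^{\omega}$, while if $x=v^{\omega}$ with $|v|$ minimal then $\R_{x}(u)=\{v\}$ for every prefix $u$ with $|u|\ge|v|$. So deciding periodicity reduces to deciding whether some prefix of $x$ has exactly one return word, and a witness immediately yields an explicit word period.

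Finally I would invoke \cite{Durand:1998} in effective form, as carried out in \cite{Durand:2012}: being primitive substitutive, $x$ is linearly recurrent with a computable constant $K$, so every prefix $u$ has at most $K$ return words, each of length at most $K|u|$; moreover each derived sequence $\D_{u}(x)$ is again primitive substitutive, only finitely many of them occur, and they can be enumerated along with the sets $\R_{x}(u)$ by iterating the derivation, the process stabilizing after a bounded number of steps. One then checks whether some of the finitely many return word sets thus produced is a singleton: if so, $x$ is periodic and one outputs the corresponding word period; if not, no prefix of $x$ has a single return word, so $x$ is not periodic, hence not ultimately periodic. The main obstacle is this last step --- turning the return word characterization into a genuine algorithm: one must bound the lengths of the return words (this is exactly where linear recurrence is used), bound the number of derived sequences and the depth of the search, and produce an effective substitution for each of them. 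Once these effectivity issues are settled, the decision, and the extraction of a word period when it exists, is immediate.
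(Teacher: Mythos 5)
Your plan is correct and follows essentially the same route as the paper: the paper's own ``proof'' is just a citation of Theorem 26 in \cite{Durand:2012} (with the word period extracted from its proof), and that result rests precisely on the machinery you invoke --- reduction to a coding, uniform recurrence turning ultimate periodicity into periodicity, and the effective return-word/derived-sequence and linear-recurrence bounds coming from \cite{Durand:1998}. So your sketch is a faithful reconstruction of the cited argument rather than a genuinely different proof.
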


\begin{proof}
The first part is Theorem 26 in \cite{Durand:2012}.
The second part can be easily deduced from the proof of this theorem.
\end{proof}

The following lemma shows that it is decidable to check that a nonnegative matrix is primitive.

\begin{lemma}
\label{lemme:horn}
\cite{Horn&Johnson:1990}
The $n\times n$ nonnegative matrix $M$ is primitive if and only if $M^{n^2-2n+2}$ has positive entries.
\end{lemma}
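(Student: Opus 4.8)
The plan is to deduce this from Wielandt's classical bound on the index of primitivity, via the directed graph associated with $M$.

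One implication is immediate: if $M^{n^2-2n+2}$ has positive entries, then $M$ has a power with positive entries, so $M$ is primitive by definition. For the converse, first note that a primitive matrix is irreducible, hence has no zero row and no zero column; consequently, if $M^k$ has positive entries then so does $M^{k+1}=MM^k$, and the set $\{k\in\mathbb{N} : M^k>0\}$ is upward closed. So it suffices to show that its least element, the index of primitivity $\gamma(M)$, satisfies $\gamma(M)\le n^2-2n+2=(n-1)^2+1$.

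To bound $\gamma(M)$, I would pass to the digraph $G$ on the vertex set $\{1,\dots,n\}$ having an arc $i\to j$ exactly when $m_{ij}>0$. Then $(M^k)_{ij}>0$ if and only if there is a directed walk of length exactly $k$ from $i$ to $j$ in $G$, and primitivity of $M$ amounts to $G$ being strongly connected with the greatest common divisor of its directed-cycle lengths equal to $1$. Let $s$ be the length of a shortest directed cycle of $G$; since such a cycle is simple, $s\le n$. For a fixed ordered pair $(i,j)$ one then constructs a walk of length exactly $(n-1)^2+1$ from $i$ to $j$: run a simple path from $i$ into the short cycle, loop around that cycle an adjustable number of times, detour through a small family of cycles witnessing the gcd-$1$ condition in order to correct the walk length modulo $s$, and finish with a simple path out to $j$.

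The arithmetic heart is a Frobenius/Sylvester-type estimate: the semigroup of lengths of closed walks based at a vertex of the short cycle contains $s\le n$ and has gcd $1$, hence contains every integer exceeding roughly $(s-1)(n-1)\le (n-1)^2$; adding the at most $n-1$ steps to enter and at most $n-1$ steps to leave, and accounting for the budget carefully, gives the bound $(n-1)^2+1$. I expect the main obstacle to be exactly this constant-chasing: Wielandt's bound is tight — it is attained by the $n$-vertex digraph consisting of a Hamiltonian cycle together with one further cycle of length $n-1$ — so there is no slack, and one must argue simultaneously that the shortest cycle has length at most $n$ and that entering it, shifting the residue of the walk length, and exiting toward $j$ all fit inside the exact budget $(n-1)^2+1$. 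The degenerate cases ($n=1$, and $s=1$ meaning some diagonal entry is positive) are checked directly.
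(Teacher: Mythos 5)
The paper itself gives no proof of this lemma: it is quoted verbatim from \cite{Horn&Johnson:1990} (it is Wielandt's theorem on the index of primitivity), so a citation is all that is needed there. Your easy direction and your reduction (upward closedness of $\{k : M^k>0\}$ via absence of zero rows/columns, hence it suffices to bound the exponent $\gamma(M)$ by $(n-1)^2+1$) are correct and are indeed how the classical proof is organized.

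However, the quantitative core is left open in your sketch, and as written it would not close. First, your own accounting overshoots: a Frobenius-type bound of roughly $(s-1)(n-1)\leq (n-1)^2$ for the closed-walk lengths, plus up to $n-1$ steps to enter the short cycle and up to $n-1$ steps to exit towards $j$, gives about $(n-1)^2+2(n-1)$, which exceeds the budget $(n-1)^2+1$; since Wielandt's bound is attained, no generic slack argument can rescue this. Second, you only use $s\leq n$, but the sharp bound genuinely needs $s\leq n-1$ for $n\geq 2$: if the shortest cycle of a strongly connected digraph on $n$ vertices had length $n$, the digraph would be exactly that Hamiltonian cycle (any extra arc creates a shorter cycle), whose cycle-length gcd is $n>1$, contradicting primitivity. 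Third, the Frobenius step itself is shakier than stated: cycles witnessing the gcd-$1$ condition need not pass through the base vertex, so realizing them as closed walks there costs additional connecting detours, inflating the lengths further. The standard way to finish (the content of the Horn--Johnson proof being cited) is the Dulmage--Mendelsohn-type estimate $\gamma(M)\leq n+s(n-2)$, obtained by a more careful walk construction (a path of length at most $n-s$ from $i$ into the short cycle $C$, then exploiting that $M^{s}$ is again primitive and its digraph has loops at every vertex of $C$), combined with $s\leq n-1$, which yields exactly $n+(n-1)(n-2)=n^2-2n+2$. Without these ingredients your argument proves at best that some power of order roughly $n^2$ is positive, not the stated exponent $n^2-2n+2$.
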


From Lemma \ref{lemme:horn}, Section 4.4 and Section 4.5 in \cite{Lind&Marcus:1995} we deduce following proposition. 

\begin{prop}
\label{prop:decomprim}
Let $M=(m_{i,j})_{i,j\in A}$ be a matrix with non-negative coefficients. 
There exist three positive integers 
$p\not = 0$, $q$, $l$, where $q\leq l-1$, and a partition 
$\{ A_i ; 1\leq i\leq l \}$ of $A$ such that
\begin{equation*}
M^p=\bordermatrix{        & A_1     & A_2         & \cdots & A_q       & A_{q+1} & A_{q+2} & \cdots & A_{l}  \cr
                  A_1     & M_1     & 0           & \cdots & 0         & 0       & 0       & \cdots & 0      \cr
                  A_2     & M_{1,2} & M_2         & \cdots & 0         & 0       & 0       & \cdots & 0      \cr
                  \vdots  & \vdots  & \vdots      & \ddots & \vdots    & \vdots  & \vdots  & \vdots & \vdots \cr
                  A_q     & M_{1,q} & M_{2,q}     & \cdots & M_q       & 0       & 0       & \cdots & 0      \cr
                  A_{q+1} & M_{1,q+1} & M_{2,q+1} & \cdots & M_{q,q+1} & M_{q+1} & 0       & \cdots & 0      \cr
                  A_{q+2} & M_{1,q+2} & M_{2,q+2} & \cdots & M_{q,q+2} & 0       & M_{q+2} & \cdots & 0      \cr
                  \vdots  & \vdots    & \vdots    & \ddots & \vdots    & \vdots  & \vdots  & \ddots & \vdots \cr
                  A_l     & M_{1,l}   & M_{2,l}   & \cdots & M_{q,l}   & 0       & 0       & \cdots & M_l    \cr},
\end{equation*}
where the matrices $M_i$ 
have only positive entries or are equal to zero.
Moreover, the partition and $p$ can be algorithmically computed.
\end{prop}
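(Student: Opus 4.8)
The plan is to realize the claimed block-triangular form as the standard decomposition of a nonnegative matrix into its communicating classes, read off the transition graph $G$ whose vertices are the letters of $A$ and whose edges are the pairs $(i,j)$ with $m_{i,j}>0$ (equivalently, with a path from $j$ to $i$). First I would define the relation $i\sim j$ if either $i=j$ or both $i$ leads to $j$ and $j$ leads to $i$ in $G$; this is an equivalence relation, and its classes $A_1,\dots,A_l$ are the strongly connected components of $G$. The reachability relation descends to a partial order on the set of classes, and by choosing a linear extension of that order (with sinks first) we obtain a numbering $A_1,\dots,A_l$ for which, in the corresponding block decomposition of $M$, the block indexed by $(A_r,A_s)$ is zero whenever $r>s$ — this is precisely the lower block-triangular shape displayed in the statement. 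For each class $A_i$, the diagonal block is the restriction of $M$ to $A_i\times A_i$; it is irreducible by construction (when $|A_i|\geq 2$ or when the single letter has a self-loop) or identically zero (when $A_i$ is a single letter with no self-loop).

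The next step is to upgrade ``irreducible'' to ``primitive'' by passing to a power. For each nonzero diagonal block $N_i$ (irreducible of size $n_i$), let $d_i$ be its period, the gcd of the lengths of the cycles through any fixed vertex of that component; it is well known that the irreducible matrix $N_i$ raised to the power $d_i$ is a block-diagonal matrix whose diagonal blocks are primitive. Letting $p$ be (a positive multiple of) the least common multiple of all the $d_i$, the matrix $M^p$ has diagonal blocks $N_i^p$ that are either block-diagonal with primitive blocks or zero. To match the statement exactly — where the $M_i$ themselves are positive or zero — I would further refine the partition $\{A_i\}$ by splitting each $A_i$ into the $d_i$ cyclic subclasses on which $N_i^{d_i}$ acts primitively; reindexing these refined classes (again respecting reachability, sinks first) gives the final partition, and $M^p$ then has the displayed form with each $M_i$ positive or zero. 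Finally, separating the indices into those $q$ classes that are ``strictly below'' others in the partial order and the remaining $l-q$ maximal ones recovers the precise shape printed (the constraint $q\leq l-1$ just says there is at least one maximal class, which is automatic since the partial order on a nonempty finite set has a maximal element). Lemma \ref{lemme:horn} is what makes ``$M_i$ is primitive'' checkable, hence the whole construction effective.

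For algorithmic computability: the graph $G$ is computed from the sign pattern of $M$; strongly connected components and a topological order of the condensation are computed in polynomial time by standard graph algorithms; the period $d_i$ of each component is computed (e.g. as the gcd of $t$ over the diagonal of increasing powers of $N_i$, which stabilizes, or by a BFS-layering argument); and primitivity of each refined diagonal block is certified by Lemma \ref{lemme:horn}, i.e. by checking positivity of a single explicit power. Hence both the partition and the exponent $p$ are produced by an explicit terminating procedure, as claimed.

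I expect the main obstacle to be purely bookkeeping rather than mathematical: carrying out the refinement into cyclic subclasses while maintaining a consistent global numbering that respects the reachability order (so that the off-diagonal blocks stay below the diagonal) requires care, because splitting one component into its $d_i$ periodic classes introduces new edges among those subclasses that must themselves be linearized compatibly with the old order. Everything else — irreducibility of the SCC blocks, the cyclic structure of irreducible matrices, primitivity after raising to the period — is classical (Sections 4.4 and 4.5 of \cite{Lind&Marcus:1995}), and the decidability of the primitivity test is exactly Lemma \ref{lemme:horn}.
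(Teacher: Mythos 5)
Your proposal is correct and takes essentially the same route as the paper, which offers no proof of its own but simply deduces the proposition from Lemma \ref{lemme:horn} together with Sections 4.4 and 4.5 of \cite{Lind&Marcus:1995} --- exactly the strongly-connected-component decomposition, cyclic-subclass refinement, and passage to a power that you spell out. The one point to tighten is that an arbitrary positive multiple of the $\operatorname{lcm}$ of the periods $d_i$ only makes the refined diagonal blocks primitive, not positive; to match the statement you should take $p$ so that each $p/d_i$ is at least the Wielandt exponent $n_i^2-2n_i+2$ of Lemma \ref{lemme:horn} (which also keeps $p$ effectively computable), a fix already implicit in the lemma you cite.
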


The next three corollaries are consequences of Proposition \ref{prop:decomprim}.

The following corollary will be helpful to change the representation of $x$ (in terms of $(\sigma , \phi )$) to a more convenient representation.

\begin{coro}
\label{coro:changing}
Let $\tau : A^* \to A^*$ be an endomorphism whose incidence matrix has the form of $M^p$ in Proposition \ref{prop:decomprim}.
Then, for all $b\in A$ and all $j\geq 1$, the letters having an occurrence in $\left(\tau^{|A |}\right)^j (b)$ or $\left(\tau^{|A |}\right)^{j+1} (b)$ are the same. 
\end{coro}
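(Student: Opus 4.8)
The plan is to exploit the block lower-triangular structure of the incidence matrix of $\tau$, where the diagonal blocks $M_i$ are each either strictly positive or zero. First I would reformulate the statement in graph-theoretic terms: let $G$ be the directed graph on vertex set $A$ with an edge $b\to c$ whenever $c$ occurs in $\tau(b)$ (equivalently, the adjacency matrix of $G$ is the support of $M_\tau$). Then the set of letters occurring in $\tau^j(b)$ is exactly the set of vertices reachable from $b$ by a path of length exactly $j$ in $G$. The strongly connected components of $G$ are precisely the classes $A_i$ of the partition, with the block structure of Proposition \ref{prop:decomprim} telling us that edges only go from $A_i$ to $A_i$ or from $A_i$ to $A_{i'}$ with $i'>i$ (and, for the classes $A_i$ with $M_i$ positive, every vertex of $A_i$ has an edge to every vertex of $A_i$, i.e. a loop on $A_i$). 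The key point is that a class $A_i$ with $M_i=0$ consists of a single vertex with no internal edge, but such a vertex $b$ then has $\tau(b)$ empty or built only from letters strictly below it in the order; in fact $M_i = 0$ for a singleton $\{b\}$ forces $b$ not to occur in $\tau(b)$.

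The main step is then: once $j \geq |A|$, the reachability set "stabilises" in the sense that a vertex reachable in exactly $j$ steps is also reachable in exactly $j+1$ steps and conversely. For the forward direction, if $c$ is reachable from $b$ in $j$ steps with $j\geq |A|$, some intermediate vertex on the path lies in a recurrent class $A_i$ (one with $M_i>0$): indeed a path of length $\geq |A|$ must revisit a strongly connected component, and revisiting forces that component to be non-trivial, hence to have a loop; inserting the loop once produces a path of length $j+1$ from $b$ to $c$, so $c$ is reachable in $j+1$ steps. For the reverse direction one deletes a loop instead: a path of length $j+1 \geq |A|+1$ likewise passes through a non-trivial component, and removing one turn around the loop gives a path of length $j$. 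Applying this with $\tau^{|A|}$ in place of $\tau$ (so one "step" is an application of $\tau^{|A|}$, and $j$ steps reach exactly the letters of $(\tau^{|A|})^j(b)$), and noting that $j\geq 1$ already gives path length $\geq |A|$ in the original graph $G$, yields the claim that the letters in $(\tau^{|A|})^j(b)$ and $(\tau^{|A|})^{j+1}(b)$ coincide for all $j\geq 1$.

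The step I expect to be the main obstacle is the careful handling of the non-growing / singleton classes $A_i$ with $M_i=0$ and of possible erasing letters: one must make sure that "reachable in exactly $j$ steps" is the right invariant even when $\tau$ erases some letters, and that inserting or deleting a loop does not accidentally change whether a letter actually survives in the image word. This is handled by observing that occurrence of a letter $c$ in $\tau^j(b)$ is genuinely equivalent to the existence of a length-$j$ walk $b = b_0 \to b_1 \to \cdots \to b_j = c$ with each $b_{k+1}$ occurring in $\tau(b_k)$ — erasing simply means some vertices have out-degree zero, which only removes walks, never creates spurious ones — so the loop-insertion/deletion argument goes through verbatim. The inequality $q \leq l-1$ and the precise placement of the positive blocks $M_1,\dots,M_q$ first in Proposition \ref{prop:decomprim} are not needed here; only the facts "block lower triangular" and "each diagonal block is positive or zero" are used.
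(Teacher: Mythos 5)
Your proof is correct, but it formalizes the stabilization differently from the paper, even though both rest on the same two structural facts: letters map into the same or higher-indexed classes (block lower-triangularity), and each diagonal block $M_i$ is either positive or zero. The paper's own proof is a terse two-remark argument: if $b$ lies in a class with $M_i\neq 0$ then every letter of $\tau(b)$ occurs in $\tau^n(b)$ for all $n$ (so the letter content of $\tau^n(b)$ is non-decreasing and stabilizes within at most $|A|$ steps), while if $M_i=0$ then all letters of $\tau(b)$ lie in strictly higher-indexed classes, and one concludes by an induction descending through the (at most $|A|$) classes. Your route instead encodes occurrence as length-$j$ walks in the associated digraph, uses the pigeonhole principle to find a revisited vertex, observes that any closed walk must stay inside one class (by the order structure), so that class has a nonzero, hence positive, diagonal block and therefore self-loops, and then adjusts walk lengths by loop surgery; this yields the slightly stronger conclusion that the letter content of $\tau^n(b)$ is constant for \emph{all} $n\geq |A|$, and it deals with erasing letters transparently, at the cost of a bit more bookkeeping than the paper's monotonicity-plus-descent sketch. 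One small imprecision to repair in your deletion direction: the closed sub-walk between the two visits of the repeated vertex may have length $m\geq 2$, so ``removing one turn around the loop'' shortens the walk by $m$, not by $1$; the fix is immediate with your own ingredients (delete the closed sub-walk and re-insert $m-1$ copies of the self-loop at that vertex, which exists because its class has a positive diagonal block). Finally, note that nothing in your argument actually needs the classes $A_i$ to coincide with the strongly connected components, nor the classes with $M_i=0$ to be singletons; only ``no internal edge when $M_i=0$'' is used, which is exactly what Proposition \ref{prop:decomprim} provides.
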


\begin{proof}
Let us take the notation of Proposition \ref{prop:decomprim}.
Let $A^{(0)}$ (resp. $A^{(1)}$) be the set of letters $b$ belonging to some $A_i$ where $M_i$ is the null matrix (resp. is not the null matrix).

The conclusion is a consequence of the following two remarks.
Let $b\in A_i$.
From the shape of the incidence matrix of $\tau $ we get:

\begin{itemize}
\item
If $b$ belongs to $A^{(1)}$, 
then all letters occurring in $\tau (b)$ occur in $\tau^n (b)$ for all $n$.
\item
If $b$ belongs to $A^{(0)}$, 
then all letters occurring in $\tau (b)$ belong to some $A_j$ with $j>i$.
\end{itemize}
This achieves the proof.
\end{proof}

\begin{coro}
\label{coro:deciderase}
It is decidable whether a given letter is growing for a given endomorphism.
\end{coro}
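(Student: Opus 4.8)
The plan is to reduce to the block-triangular situation of Proposition \ref{prop:decomprim} and then read off growth from the combinatorics of the blocks, together with the (decidable) notion of a mortal letter. First I would apply Proposition \ref{prop:decomprim} to $M=M_\sigma$, obtaining $p$ and the partition $\{A_i\}$, and set $\tau=\sigma^p$, whose incidence matrix is $M^p$. A letter $b$ is growing for $\sigma$ if and only if it is growing for $\tau$: one direction is immediate since $(|\sigma^{pn}(b)|)_n$ is a subsequence of $(|\sigma^n(b)|)_n$, and for the other one uses $|\sigma^{p(n+1)}(b)|\le(\max_{c\in A}|\sigma^{p-r}(c)|)\,|\sigma^{pn+r}(b)|$ for $0\le r<p$, so $|\sigma^{pn+r}(b)|\to+\infty$ whenever $|\sigma^{pn}(b)|\to+\infty$. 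Thus it suffices to decide growth for $\tau$.

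Next I would compute the set $\mathcal M$ of letters that are \emph{mortal} for $\tau$, i.e.\ with $\tau^k(c)=\epsilon$ for some $k$: this is the stationary value of the increasing sequence $\mathcal M_0=\{c:\tau(c)=\epsilon\}$, $\mathcal M_{k+1}=\{c:\tau(c)\in\mathcal M_k^*\}$, reached at $\mathcal M_{|A|}$, so it is computable; the remaining letters are \emph{immortal} and satisfy $|\tau^n(c)|\ge 1$ for all $n$. A mortal letter is bounded, so we may assume $b$ immortal. Then, using the shape of $M^p$, for $c\in A_i$ the word $\tau(c)$ uses only letters of $A_i\cup A_{i+1}\cup\dots\cup A_l$, contains all of $A_i$ when $M_i$ is positive, and contains no letter of $A_i$ when $M_i=0$. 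Call $A_i$ \emph{expanding} if $M_i\neq0$ and either $|A_i|\ge2$ or the single entry of $M_i$ is at least $2$, and let $\mathcal G$ be the smallest set of letters such that: (i) every letter of an expanding block lies in $\mathcal G$; (ii) if $\tau(c)$ contains a letter of $\mathcal G$ then $c\in\mathcal G$; (iii) if $c$ lies in a nonzero diagonal block (necessarily a singleton with entry $1$, else already covered by (i)) and $\tau(c)$ contains an occurrence of an immortal letter distinct from the unique occurrence of $c$, then $c\in\mathcal G$. This fixpoint is reached in at most $|A|$ rounds, and I claim $b$ is growing for $\tau$ if and only if $b\in\mathcal G$, which finishes the proof.

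The verification of this claim, which I expect to be the main (routine but delicate) point, splits according to the clauses and their failures. For $b\in\mathcal G\Rightarrow$ growing: an expanding block of size $k\ge2$ forces $|\tau^n(b)|$ to be at least of order $k^n$ because $M_i$ is a positive integer matrix with row sums $\ge k$, so $M_i^n\mathbf1\ge k^n\mathbf1$; a singleton expanding block gives exponential growth at once; clause (ii) propagates growth since $|\tau^{m+n}(c)|\ge|\tau^n(d)|$ whenever $d$ occurs in $\tau^m(c)$; and under clause (iii), writing $w$ for $\tau(c)$ with the unique $c$ removed, one gets $|\tau^n(c)|=1+\sum_{k<n}|\tau^k(w)|\ge n$ since $w$ contains an immortal letter. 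For the converse I would argue by induction on the block index, from $A_l$ down to $A_1$, that a letter not in $\mathcal G$ is bounded: if $c\notin\mathcal G$ then its block is not expanding, $\tau(c)$ contains no letter of $\mathcal G$ and hence, by induction, only bounded letters from strictly later blocks, while the failure of clause (iii) forces any letters of $\tau(c)$ other than a possible copy of $c$ to be mortal; in each case $|\tau^n(c)|$ stays bounded.

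The only genuine difficulty is the interplay between erasing (mortal) letters and merely linearly growing letters — a letter with $\tau(c)=cw$, $w$ immortal but bounded, still grows linearly, whereas if $w$ is mortal it does not — and this is exactly what the use of $\mathcal M$ and of clause (iii) is designed to capture. I note that an alternative route is to observe that $(|\sigma^n(b)|)_n=\sum_{c}(M^n)_{c,b}$ satisfies a linear recurrence and invoke decidability of boundedness of such sequences, but the argument above stays within the tools already developed here via Proposition \ref{prop:decomprim}.
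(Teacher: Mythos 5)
Your proposal is correct and takes essentially the paper's approach: both rest on the block-triangular form of Proposition \ref{prop:decomprim}, the trichotomy of diagonal blocks (null, $[1]$, positive/``expanding''), and mortality (erasure under a power), with a $[1]$-block letter declared growing exactly when it eventually produces an immortal letter other than its own copy. The only difference is bookkeeping: the paper reads the answer off the set of letters occurring in the single power $\tau^{p|A|}(b)$ via Corollary \ref{coro:changing}, whereas you compute the mortal set and the reachability closure $\mathcal{G}$ and justify it by induction on the block index --- both are sound.
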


\begin{proof}
Let $\tau$ be an endomorphism.
Let us take the notation of Proposition \ref{prop:decomprim}.
Let $A^{(0)}$ (resp. $A^{(1)}$) be the set of letters $b$ belonging to some $A_i$ where $M_i$ is the null matrix (resp. the $1\times 1$ matrix $[1]$).
The letters belonging to $A\setminus (A^{(0)} \cup A^{(1)})$ are growing.

Let $b\in A_i \cap A^{(1)}$ for some $i$.
Then, from Corollary \ref{coro:changing}, $b$ is non-growing (w.r.t. $\tau$) if and only if 
all letters occurring in $\tau^{p|A|} (b)$, except $b$, are erasing with respect to $\tau^{p|A|}$.
Let $A'$ be the set of such non-growing letters.

Let $b\in A_i \cap A^{(0)}$ for some $i$.
Then $b$ is non-growing if and only if all letters occurring in $\tau^{p|A|} (b)$ are erasing with respect to $\tau^{p|A|}$ or belong to $A'$.

Moreover, from Proposition \ref{prop:decomprim} and Corollary \ref{coro:changing} we can decide whether a letter is erasing w.r.t. $\tau^{p|A|}$.
This achieves the proof.
\end{proof}

In what follows we keep the notation of Proposition \ref{prop:decomprim}.
We will say that $\{ A_i; 1\leq i\leq l \}$ is a {\it primitive 
component partition of $A$ (with respect to $M$)}, the $A_i$ being the {\em primitive components}. 
If $i$ belongs to $\{ q+1, \cdots,l \}$ we will say that $A_i$ is a 
{\it principal primitive component of $A$ (with respect to $M$)}.

Let $\tau: A^*\rightarrow A^{*}$ be a substitution whose incidence matrix has the form of $M^p$ in Proposition \ref{prop:decomprim}. 
Let $i\in \{ q+1,\cdots,l \}$. We 
denote $\tau_i$ the restriction $\tau_{/A_i}: A_i^*\rightarrow A^*$ 
of $\tau $ to $A_i^*$. Because $\tau_i (A_i)$ is included in 
$A_i^*$ we can consider that $\tau_i$ is an endomorphism of $A_i^*$ whose incidence matrix is $M_i$. 
When it defines a substitution, we say it is a {\it sub-substitution of $\tau$}. 
Moreover the matrix $M_i$ has positive 
coefficients which implies that the substitution $\tau_i $ 
is primitive.

A non-trivial primitive endomorphism always has some power that is a substitution.
For non-primitive endomorphisms we have the following corollary.

\begin{coro}
\label{coro:subsub}
Let $\tau : A^* \to A^*$ be an endomorphism whose incidence matrix has the form of $M^p$ in Proposition \ref{prop:decomprim}.
Then, with the notation of Proposition \ref{prop:decomprim}, there exists $k\leq |A|^{|A|}$ satisfying : for all $i\geq q+1$ such that $M_i$ is neither a null matrix nor the $1\times 1$ matrix $[1]$, the endomorphism $\tau_i^k$  is a (primitive) substitution for some letter in $A_i$.
\end{coro}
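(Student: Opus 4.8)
The plan is to treat each principal primitive component $A_i$ (those with $i\geq q+1$ and $M_i$ neither null nor equal to the matrix $[1]$) on its own, and then to choose a single exponent $k$ that serves all of them simultaneously.

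First I would collect the structural facts coming from the block shape of the incidence matrix of $\tau$ in Proposition \ref{prop:decomprim}. For $i\geq q+1$ the $A_i$-column of $M^p$ has all of its (block) mass in the $A_i$-row, so, as already observed above, $\tau_i$ is genuinely an endomorphism of $A_i^*$ with incidence matrix $M_i$. If $|A_i|=1$, then $M_i=[m]$ with $m\geq 2$ (it is not $[1]$), so $\tau_i$ sends the unique letter $b$ of $A_i$ to $b^m$; then $\tau_i^k$ is a substitution prolongable on $b$ for every $k\geq 1$, so this component imposes no constraint. If $|A_i|\geq 2$, then $M_i$ has only positive entries, and I will use the two consequences that every letter of $A_i$ is growing for $\tau_i$ and that $|\tau_i(c)|\geq|A_i|\geq 2$ for every $c\in A_i$ (in particular $\tau_i$ is non-erasing on $A_i$).

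Next I would secure prolongability in the case $|A_i|\geq 2$ by a pigeonhole argument on first letters. Let $f_i:A_i\to A_i$ send a letter $c$ to the first letter of $\tau_i(c)$; then, by induction on $n$, the first letter of $\tau_i^n(c)$ is $f_i^n(c)$. Since $A_i$ is finite, the $f_i$-orbit of any letter eventually enters a cycle, so there are $b_i\in A_i$ and an integer $m_i$ with $1\leq m_i\leq|A_i|$ such that $f_i^{m_i}(b_i)=b_i$. Hence $\tau_i^{m_i}(b_i)$ starts with $b_i$; since $|\tau_i^{m_i}(b_i)|\geq 2$ we may write $\tau_i^{m_i}(b_i)=b_iu$ with $u\in A_i^+$, and iterating the identity $\tau_i^{(n+1)m_i}(b_i)=\tau_i^{nm_i}(b_i)\,\tau_i^{nm_i}(u)$ shows that $|\tau_i^{nm_i}(b_i)|$ is strictly increasing in $n$, hence tends to $+\infty$. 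The same conclusion holds with $m_i$ replaced by any positive multiple of it, and $\tau_i^{m}$ is primitive for every $m\geq 1$ because $M_i^{m}$ is again a positive matrix.

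Finally I would fix a uniform exponent, for instance $k=|A|!$. For each principal component $i$ with $M_i$ neither null nor $[1]$ we have $m_i\leq|A_i|\leq|A|$ (setting $m_i=1$ in the one-letter case), so $m_i$ divides $k$ and therefore $\tau_i^{k}$ is a primitive substitution, prolongable on $b_i$ (respectively on the unique letter of $A_i$ when $|A_i|=1$). Since $|A|!\leq|A|^{|A|}$, this $k$ satisfies the stated bound. The only point that needs care — and it is minor — is the order of quantifiers: one must produce a single $k$ that works for all the relevant components at once while still obeying $|A|^{|A|}$. Taking the factorial, or equivalently the least common multiple of the finitely many cycle lengths $m_i$ (which is at most $\prod_i|A_i|\leq|A|^{|A|}$), resolves this.
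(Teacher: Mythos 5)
Your proof is correct and follows essentially the same route as the paper: a pigeonhole argument on first letters inside each principal component yields, for each relevant $i$, a letter $b_i$ and an exponent $m_i\leq|A_i|$ with $\tau_i^{m_i}(b_i)=b_iu$, and then one takes a common multiple of these exponents bounded by $|A|^{|A|}$ (the paper uses the product $k_{q+1}\cdots k_l$, you use $|A|!$ or the lcm, which is immaterial). Your explicit verification that $u$ is non-empty and that $|\tau_i^{nk}(b_i)|\to+\infty$ (needed for prolongability in the paper's sense) is a welcome detail that the paper leaves implicit.
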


\begin{proof}
We only have to check there exists $k\leq |A|^{|A|}$ such that for all $i\geq q+1$ there exists a letter $b\in A_i$ satisfying $\tau_i^k (b) = bu$ for some non-empty word $u$. 

Let $i\geq q+1$ and $c\in A_i$. 
There exist $k_i\geq 1$ and $j\geq 0$, with $k_i+j \leq |A|$, such that $\tau_i^{j} (c)$ and $\tau_i^{k_i+j} (c)$
start with the same letter $b$.
That is to say, $\tau_i^{k_i} (b) = bu$ for some $u$.
To conclude, it suffices to take $k=k_{q+1} \cdots k_l$.  
\end{proof}

The following lemma is easy to establish.

\begin{lemma}
\label{lemma:subper}
Let $x=\phi (\sigma^\omega (a))$. 
If $x=uv^\omega$, where $v$ is not the empty word, then each sub-substitution $\sigma'$ of $\sigma$ such that $\L (\sigma' )\subset \L (\sigma^\omega (a))$ verifies $\phi (\L(\sigma'))\subset \L(v^\omega )$.    
\end{lemma}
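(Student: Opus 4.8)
The plan is to show that if $x = \phi(\sigma^\omega(a))$ is ultimately periodic with non-empty word period $v$, then any sub-substitution $\sigma'$ whose language embeds into the language of $y = \sigma^\omega(a)$ must have its $\phi$-image land inside the language of $v^\omega$. The key observation is that $\L(v^\omega)$ is exactly the set of factors of the purely periodic sequence $v^\omega$, and since $x = uv^\omega$, the tail of $x$ (everything after the prefix $u$) is $v^\omega$; hence every sufficiently long factor of $x$ is a factor of $v^\omega$, and in fact \emph{every} factor of $x$ that occurs infinitely often (equivalently, occurs arbitrarily far out) lies in $\L(v^\omega)$.

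First I would record the basic fact that if $\sigma'$ is a sub-substitution of $\sigma$, say $\sigma' = \sigma_{/A_i}$ on some primitive component $A_i$, then $\sigma'$ is primitive, so $\L(\sigma')$ is the language of a uniformly recurrent sequence; in particular every word in $\L(\sigma')$ recurs in any point of the subshift it generates. Combined with the hypothesis $\L(\sigma') \subset \L(y)$, I would argue that every word $w \in \L(\sigma')$ occurs in $y$ at infinitely many positions, and moreover these positions are syndetic inside the subset of $y$ where $\sigma'$-images appear. The point to extract is: letters of the primitive component $A_i$ occur in $y$ along an infinite set of positions, and around each such occurrence a full $\sigma'^n$-image of a word of $\L(\sigma')$ appears as a factor of $y$.

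Next I would pass through $\phi$. Since $\phi$ is applied to $y$ to get $x$, and since $\phi$ may be erasing, I need to be slightly careful: a factor $z \in \L(\sigma')$ occurring in $y$ at position $p$ produces the factor $\phi(z)$ of $x$ occurring at position $|\phi(y_{[0,p-1]})|$. Because $z$ (hence arbitrarily long powers-images of $z$) recurs infinitely far out in $y$, and because the growth of $|\phi(\sigma'^n(\cdot))|$ is controlled — here one must handle the case where $\phi$ erases all of $A_i$, in which case $\phi(\L(\sigma')) = \{\epsilon\} \subset \L(v^\omega)$ trivially — the word $\phi(z)$ occurs in $x$ at positions tending to infinity. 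Any factor of $x$ occurring at positions tending to $+\infty$ must be a factor of the eventual periodic tail $v^\omega$, i.e.\ lies in $\L(v^\omega)$. Applying this to every $z \in \L(\sigma')$ gives $\phi(\L(\sigma')) \subset \L(v^\omega)$, which is the claim.

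The main obstacle, and the only genuinely delicate point, is the erasing issue: one must ensure that the $\phi$-images of $\sigma'$-factors actually appear at unboundedly large positions in $x$, rather than being swallowed because $\phi$ erases the relevant letters or because the non-erased part gets pushed to bounded position. The clean way around this is to split into two cases according to whether $\phi$ restricted to $A_i$ is identically erasing: if it is, the conclusion is immediate since $\phi(\L(\sigma')) \subseteq \{\epsilon\}$; if it is not, then some letter of $A_i$ has non-empty $\phi$-image, and since that letter recurs infinitely often in $y$ by primitivity of $\sigma'$, the positions in $x$ carrying $\phi$-images of $\sigma'$-factors are indeed unbounded, because each occurrence of a long $\sigma'^n(z)$ in $y$ contributes a block of $x$ of unbounded length and the number of such occurrences before it is unbounded. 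Everything else is a routine unwinding of the definitions, which is presumably why the authors call the lemma "easy to establish."
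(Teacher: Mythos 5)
Your argument is correct and is essentially the paper's own proof: primitivity of the sub-substitution gives a uniformly recurrent sequence with language $\L (\sigma')$, whence every word of $\L (\sigma')$ occurs at unboundedly large positions in $\sigma^\omega (a)$, and its $\phi$-image therefore occurs in the periodic tail $v^\omega$, so it lies in $\L (v^\omega)$. Your extra case analysis on whether $\phi$ erases the whole primitive component is a reasonable way to justify the last step that the paper leaves implicit (one can also just note that $x=\phi (\sigma^\omega (a))$ being infinite forces the prefix-image lengths to tend to infinity), and the side remarks about syndetic positions and full $\sigma'^n$-images are not needed but do no harm.
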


\begin{proof}
Let $\sigma'$ be a sub-substitution of $\sigma $. 
Its incidence matrix being primitive, there exists an uniformly recurrent sequence $z$ such that $\mathcal{L} (\sigma') = \mathcal{L} (z)$ (see \cite{Queffelec:1987}).
Thus, the words of $\mathcal{L} (\sigma')$ appear infinitely many times in $\sigma^\omega (a)$.
Finally, for all $w\in \mathcal{L} (\sigma')$, $\phi (w)$ should occur in $v^\omega$.
\end{proof}

\subsection{Reduction of the problem}

It may happen, as for $\sigma $ defined by $a\mapsto ab$, $b\mapsto a$, $c\mapsto c$, that some letter of the alphabet, here the alphabet is $\{ a,b,c\}$, does not appear in $\sigma^\omega (a)$. 
It is preferable to avoid this situation. 
Corollary \ref{coro:changing} enables us to avoid this algorithmically.
We explain this below.
Indeed, from Proposition \ref{prop:decomprim}, taking a power of $\sigma$ (that can be algorithmically found) if needed, we can suppose 

\begin{itemize}
\item[(P1)]
the incidence matrix of $\sigma$ has the form of $M^p$ in Proposition \ref{prop:decomprim}.
\end{itemize}

Then, consider $\sigma^{|A|}$ instead of $\sigma$.
Hence, $\sigma $ will continue to satisfy (P1) and, from Corollary, \ref{coro:changing} we have 

\begin{itemize}
\item[(P2)]
for all $b\in A$ and all $j\geq 1$ the letters having an occurrence in $\sigma^j (b)$ or $\sigma^{j+1} (b)$ are the same. 
\end{itemize}

Notice that, as $\sigma $ is a substitution, taking a power of $\sigma$ instead of $\sigma$ will change neither $y$ nor $x$. It will not be the case when we will deal with endomorphisms which are not substitutions.

Let $A'$ be the set of letters appearing in $\sigma^\omega (a)$.
From (P2) it can be checked that $\sigma (A')$ is included in $A'^*$ and that the set of letters appearing in $\sigma (a)$ is $A'$.
Thus $\sigma'$, the restriction of $\sigma$ to $A'$, defines a substitution prolongable on $a$ satisfying $\sigma'^\omega (a)=\sigma^\omega (a)$ such that all letters of $A'$ have an occurrence in $\sigma'^\omega (a)$ and all letters of $A'$ occur in $\sigma' (a)$.
Hence we can always suppose $\sigma $ and $a$ satisfy the following condition.

\begin{itemize}
\item[(P3)]
The set of letters occurring in $\sigma^\omega (a)$ is $A$.
\end{itemize}

When we work with morphic sequences it is much simpler to handle with non-erasing substitutions and even better to suppose that $\phi$ is a coding. 
Such a reduction is possible as shown in \cite{Cassaigne&Nicolas:2003}.

\begin{theo}
\label{th:cassaignenicolas}
Let $x$ be a morphic sequence. 
Then, $x$ is substitutive with respect to a non-erasing substitution.
\end{theo}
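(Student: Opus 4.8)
The plan is to reduce the problem in two stages: first eliminate erasing letters, then reduce the coding-free statement to a letter-to-letter morphism. Since the statement we must prove (Theorem \ref{th:cassaignenicolas}) only asserts the first of these — substitutivity with respect to a \emph{non-erasing} substitution — I would follow the strategy of \cite{Cassaigne&Nicolas:2003} but take care that every step is effective, because the point of the section is to produce an algorithm. So suppose $x=\phi(\sigma^\omega(a))$ for some substitution $\sigma:A^*\to A^*$ prolongable on $a$ and some morphism $\phi:A^*\to B^*$.

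First I would apply the reductions (P1)--(P3) already established above, so that we may assume the letters of $A$ are exactly those occurring in $\sigma^\omega(a)$ and the incidence matrix of $\sigma$ has the block lower-triangular form of Proposition \ref{prop:decomprim}. Next, using Corollary \ref{coro:deciderase}, I would separate $A$ into the set $A_g$ of growing letters and the set $A_n$ of non-growing letters. A non-growing letter $b$ has the property that $\sigma^n(b)$ stays of bounded length for all $n$; since (by (P2)) the letters occurring in $\sigma^n(b)$ stabilize, the forward orbit $\{\sigma^n(b):n\ge 1\}$ under $\sigma$ is a finite set of bounded-length words over $A$, and one sees that eventually these words lie over $A_n$ alone. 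The fixed point $y=\sigma^\omega(a)$ can then be rewritten: in $y$, replace each maximal block over $A_n$ by a single new letter, recording which block it stands for; there are finitely many such blocks, so this enlarges the alphabet only finitely. Pushing $\sigma$ through this recoding yields a new substitution $\hat\sigma$ on an alphabet $\hat A$ in which the ``old'' letters of $A_n$ never appear inside images except possibly packaged into these block-letters, and the non-erasing defect coming from $A_n$ has been absorbed; composing $\phi$ with the block-expansion map gives the new morphism. After this step the only way the morphism on $\hat A$ can erase is through the image under $\phi$ of letters that already had empty $\phi$-image; those letters still occur inside images of $\hat\sigma$, but now every letter of $\hat A$ that $\phi$ erases is one whose $\hat\sigma$-images we can track, and a further recoding (collapsing runs that $\phi$ sends to $\epsilon$) removes them exactly as before. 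Iterating this finitely many times — the key point being that each pass strictly decreases a well-chosen complexity parameter (e.g. the number of letters on which the composed morphism is erasing, weighted suitably) — produces a substitution $\sigma'$ and a morphism $\phi'$ with $\phi'$ non-erasing and $x=\phi'(\sigma'^\omega(a'))$.

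The main obstacle is making the block-recoding interact correctly with $\sigma$: when a non-growing letter $b$ appears in the image $\sigma(c)$ of a growing letter $c$, the block of non-growing letters surrounding a given occurrence of $b$ in $y$ may straddle the boundary between $\sigma(c)$ and $\sigma(c')$ for adjacent letters $c,c'$, so the rewriting is not simply ``apply a fixed substitution to the recoded word''. The remedy, as in \cite{Cassaigne&Nicolas:2003}, is to encode enough left/right context into the new letters (a bounded amount, by (P2) and the bounded length of the $\sigma^n(b)$) so that the action of $\sigma$ on blocks is well defined; verifying that only finitely many context-decorated letters arise, and that the resulting $\hat\sigma$ is still prolongable on the (decorated) initial letter, is the routine but delicate part. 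The non-algorithmic ingredients of the original argument — the cited Lemmas 2, 3, 4 of \cite{Cassaigne&Nicolas:2003} — are replaced by the effective statements Corollary \ref{coro:deciderase} (deciding growth of a letter) and Corollary \ref{coro:changing} (stabilization of occurring letters under iteration), which is precisely why the whole construction can be carried out by an algorithm.
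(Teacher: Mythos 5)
Your construction breaks down at the block-recoding step. You replace each maximal block of non-growing letters in $y=\sigma^\omega(a)$ by a single new letter and assert that ``there are finitely many such blocks, so this enlarges the alphabet only finitely.'' That is false in general: maximal runs of non-growing letters in $\sigma^\omega(a)$ need not be bounded. This is precisely the dichotomy of Pansiot recalled in Lemma \ref{lemma:pansiot}: when some growing letter $b$ has $\sigma(b)=vbu$ with $u$ a non-empty word over the non-growing letters, the runs grow without bound --- for instance $\sigma(a)=aac$, $\sigma(c)=c$ produces runs $c, cc, ccc, \dots$ in the fixed point, so your recoded alphabet would be infinite. The same obstruction hits your second pass (``collapsing runs that $\phi$ sends to $\epsilon$''), since with $\phi(c)=\epsilon$ in that example the $\phi$-erased runs are unbounded, and the termination argument via a ``well-chosen complexity parameter'' is left unsubstantiated. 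There is also a mismatch with the statement itself: in this paper ``substitutive'' means the outer morphism is a \emph{coding}, so the theorem requires a letter-to-letter $\varphi$ together with a non-erasing substitution, whereas your final output is only a non-erasing morphism $\phi'$ (and the erasing status of $\sigma'$ is not addressed). So the hardest half of the reduction --- getting down to a coding --- is missing.

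The paper's route avoids block recoding entirely. After enforcing (P1)--(P3) via Proposition \ref{prop:decomprim} and Corollary \ref{coro:changing}, it removes the erasing letters of $\sigma$ by the projection $\psi$ that deletes them: the induced endomorphism $\sigma'$ on the non-erasing letters satisfies $\psi\circ\sigma=\sigma'\circ\psi$ and $\sigma(z)=y$ for $z=\sigma'^\omega(a)$, so one may replace $(\sigma,\phi)$ by $(\sigma',\phi\circ\sigma)$ with $\sigma'$ non-erasing; no bounded-run hypothesis is needed, only (P2) to see that a non-erasing letter never becomes erasing under iteration. It then replaces $\phi$ by $\phi\circ\sigma$ and $\sigma$ by a suitable power so that the inequalities \eqref{eq:cassaignenicolas} hold, and feeds this into the second half of the Cassaigne--Nicolas proof, which is already algorithmic and is what actually produces the coding. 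To repair your argument you would either have to treat the unbounded-run case separately (as the paper does later, for periodicity, via Lemma \ref{lemma:pansiot}) or switch to this projection-plus-\eqref{eq:cassaignenicolas} strategy.
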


This result was previously proven in \cite{Cobham:1968} and \cite{Pansiot:1983} (see also \cite{Allouche&Shallit:2003} and \cite{Cassaigne&Nicolas:2003}). 
It was shown that it could be algorithmically done in \cite{Honkala:2009}.
In the sequel we give another algorithm.

The proof of J. Cassaigne and F. Nicolas is short and inspired by \cite{Durand:1998}, in particular its second part which is clearly algorithmic.
Whereas the first part (Lemma 2, Lemma 3 and Lemma 4 of \cite{Cassaigne&Nicolas:2003}) is not because it uses the fact that from any sequence of integers, we can extract a subsequence that is either constant or strictly increasing. 
They use these lemmas to show the key point of their proof~: we can always suppose that $\phi$ and $\sigma$ fulfill the following :

\begin{equation}
\label{eq:cassaignenicolas}
|\phi (\sigma (a))| > |\phi (a) | > 0 \hbox{ and } |\phi (\sigma (b))| \geq |\phi (b) | \hbox{ for all } b\in A .
\end{equation}

Below we show that this can be algorithmically realized. 
This provides another algorithm for Theorem \ref{th:cassaignenicolas}.

First let us show that $\sigma $ can be supposed to be non-erasing.
As we explained before, there is no restriction to suppose $\sigma $ satisfies (P1), (P2) and (P3).

As $\sigma $ satisfies (P2), each letter $e$ is either erasing or, for all $l$, $\sigma^l (e)$ is not the empty word. 
Let $A'$ be the set of non-erasing letters and $A''$ the set of erasing letters.
Let $\psi $ be the morphism that sends the elements of $A''$ to the empty word and that is the identity for the other letters. 
Then, we define $\sigma'$ to be the unique endomorphism defined on $A'$ satisfying $\psi \circ \sigma = \sigma' \circ \psi$.
Observe that $\sigma'$ is easily algorithmically definable and prolongable on $a$.
Moreover we have $\sigma \psi = \sigma$. 
Let $z = \sigma'^\omega (a)$.
Then, $\psi (y) = z$ and $\sigma (z) = y$.

Notice that $\sigma'$ is non-erasing. 
Indeed, if $\sigma ' (a') = \epsilon $ for some $a'\in A'$, then $\psi (\sigma (a')) = \sigma' (a') = \epsilon$.
Hence $\sigma (a') = b_1 \cdots b_k$ where the $b_i$'s belong to $A''$.
Then $\sigma^2 (a') = \epsilon$.
But, from Property (P2), $\sigma^2 (a')$ is not the empty word.

Thus we can also consider  

\begin{itemize}
\item
[(P4)]
$\sigma $ is non-erasing.
\end{itemize}

Consequently, from (P2), $|\phi \circ \sigma (\sigma (a))| > |\phi (\sigma (a))| > |\phi (a) |$,
otherwise $\phi (\sigma^\omega (a))$ would not be an infinite sequence.
Hence, replacing $\phi $ with $\phi\circ \sigma $ if needed, we can suppose $\phi $ and $\sigma $ are such that
$|\phi (\sigma (a))| > |\phi (a) | > 0$.

Moreover, we claim that $\sigma^{2}(b) = \sigma (b)$ for all non-growing letters $b\in A$.
Let $b$ be a non-growing letter. 
As $\sigma$ is non-erasing we necessarily have $|\sigma^{2}(b)| \geq |\sigma (b)|$.
Suppose $|\sigma^{2}(b)| > |\sigma (b)|$.
Then, the letters occurring in $\sigma^{2} (b)$ and $\sigma (b)$ being the same, we would have $|\sigma^n (b)|\geq n+1$ for all $n$, and, $b$ would not be growing.
Consequently, $|\sigma^{2}(b)| = |\sigma (b)|$.
Let $\sigma (b) = b_1b_2 \cdots b_l$.
Then, $|\sigma (b_i) | = 1$ for all $i$, and, from the shape of the incidence matrix of $\sigma$, $\sigma (b_i) = b_i$ for all $i$.

Therefore, replacing $\phi$ with $\phi \circ \sigma$ if needed, we can suppose $|\phi (\sigma^n (b))| \geq  |\phi (b)| $ for all non-growing letter $b$ and all $n$.

Again, replacing $\sigma $ with $\sigma^{k}$, where $k=\max_{a\in A} |\phi (a)|$, if needed, we can suppose \eqref{eq:cassaignenicolas} holds for $\sigma$ and $\phi$.

Hence, together with the argument of the proof of Theorem \ref{th:cassaignenicolas} we obtain the algorithm we are looking for.
This is summarized in the following theorem (first proved in \cite{Honkala:2009}).

\begin{theo}
There exists an algorithm that given $\phi$ and $\sigma $ compute a coding $\varphi$ and a non-erasing substitution $\tau$, prolongable on $a$, such that $x=\varphi (z)$ where $z=\tau^\omega (a)$.
\end{theo}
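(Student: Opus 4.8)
The plan is to feed the reductions already established into the effective part of the Cassaigne--Nicolas argument. By the discussion preceding the statement, after replacing $\sigma$ by a suitable (algorithmically computable) power and $\phi$ by $\phi\circ\sigma$ finitely many times, I may assume that $\sigma$ is non-erasing, prolongable on $a$, and that \eqref{eq:cassaignenicolas} holds, namely $|\phi(\sigma(a))|>|\phi(a)|>0$ and $|\phi(\sigma(b))|\geq|\phi(b)|$ for every $b\in A$. Each of these replacements is explicit, so it suffices to exhibit an effective construction producing, from such a pair $(\sigma,\phi)$, a coding $\varphi$ and a non-erasing substitution $\tau$ prolongable on some letter and satisfying $\varphi(\tau^\omega(\cdot))=\phi(\sigma^\omega(a))=x$. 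This is precisely the (effective) second half of the proof of Theorem \ref{th:cassaignenicolas}, which I now spell out.

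I would work over the \emph{decorated alphabet} $C=\{(b,k)\ :\ b\in A,\ 1\leq k\leq|\phi(b)|\}$ (note $(a,1)\in C$ since $|\phi(a)|>0$). Let $\delta\colon A^*\to C^*$ be the morphism $\delta(b)=(b,1)(b,2)\cdots(b,|\phi(b)|)$, so that $|\delta(w)|=|\phi(w)|\geq|w|$ for all $w\in A^*$, and let $\varphi\colon C^*\to B^*$ be the coding sending $(b,k)$ to the $k$-th letter of $\phi(b)$; thus $\varphi\circ\delta=\phi$. To define $\tau$, fix $b\in A$ with $|\phi(b)|\geq1$ and observe that $|\delta(\sigma(b))|=|\phi(\sigma(b))|\geq|\phi(b)|$ by \eqref{eq:cassaignenicolas}; I cut $\delta(\sigma(b))$ into $|\phi(b)|$ consecutive non-empty factors $w_{b,1},\ldots,w_{b,|\phi(b)|}$, insisting that for $b=a$ the first factor $w_{a,1}$ have length $|\phi(\sigma(a))|-|\phi(a)|+1\geq2$ (using $|\phi(\sigma(a))|>|\phi(a)|$), and set $\tau((b,k))=w_{b,k}$. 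Every ingredient $C$, $\delta$, $\varphi$, $\tau$ is a finite object computed explicitly from $\sigma$ and $\phi$.

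The verification then runs as follows. The map $\tau$ is non-erasing because each $w_{b,k}$ is non-empty. By construction $\tau(\delta(b))=w_{b,1}\cdots w_{b,|\phi(b)|}=\delta(\sigma(b))$ for every $b$, hence $\tau\circ\delta=\delta\circ\sigma$; since $\sigma(a)=au$ with $u\neq\epsilon$, the word $\delta(\sigma(a))$ (and therefore its prefix $w_{a,1}$) begins with $(a,1)$, so $\tau((a,1))=(a,1)u'$ with $u'\neq\epsilon$. As $\tau$ is non-erasing we have $|\tau(v)|\geq|v|$ for all $v$, so $(|\tau^n((a,1))|)_n$ is non-decreasing; were it bounded it would stabilise at some word, forcing $|\tau(c)|=1$ for every letter $c$ of that word, in particular $|\tau((a,1))|=1$, contradicting $|\tau((a,1))|\geq2$. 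Hence $\tau$ is a substitution prolongable on $(a,1)$ and $z:=\tau^\omega((a,1))$ is defined. Iterating $\tau\circ\delta=\delta\circ\sigma$ and using $\sigma(y)=y$ shows that $\delta(y)$ is a fixed point of $\tau$ beginning with $(a,1)$, so each $\tau^n((a,1))$ is a prefix of $\delta(y)$ (a prefix maps to a prefix under a morphism); having unbounded length, $z$ must equal $\delta(y)$. Finally $\varphi(z)=\varphi(\delta(y))=\phi(y)=x$, as required. The step I expect to be the genuine obstacle is the one already dispatched before the statement---making \eqref{eq:cassaignenicolas} hold algorithmically; once it holds, the only real care needed is to arrange the cut of $\delta(\sigma(b))$ so that $\tau$ is simultaneously non-erasing, prolongable with lengths tending to infinity, and compatible with $\delta$ and $\sigma$, all of which hinge on the two inequalities of \eqref{eq:cassaignenicolas}.
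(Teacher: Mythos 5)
Your construction is the classical decorated-alphabet argument (essentially the effective second half of the Cassaigne--Nicolas proof that the paper invokes), but as written it has a genuine gap at the step ``$\tau(\delta(b))=\delta(\sigma(b))$ for every $b$, hence $\tau\circ\delta=\delta\circ\sigma$''. You only define the cut for letters $b$ with $|\phi(b)|\geq 1$; for a letter $b$ with $\phi(b)=\epsilon$ the claimed identity reads $\epsilon=\delta(\sigma(b))$, i.e.\ it requires $\phi(\sigma(b))=\epsilon$, and this is \emph{not} guaranteed by \eqref{eq:cassaignenicolas}, which for such a letter only gives $|\phi(\sigma(b))|\geq 0$. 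Such letters can survive every reduction made before the statement. Concretely, take $A=\{a,b,c,d\}$, $\sigma(a)=abcd$, $\sigma(b)=c$, $\sigma(c)=c$, $\sigma(d)=dd$, and $\phi(a)=0$, $\phi(b)=\epsilon$, $\phi(c)=1$, $\phi(d)=2$: conditions (P1)--(P4) and \eqref{eq:cassaignenicolas} all hold with no further replacement needed, and $x=\phi(\sigma^\omega(a))=0\,1\,2\,1\,1\,2^2\,1\,1\,2^4\,1\,1\,2^8\cdots$. Your recipe gives $C=\{(a,1),(c,1),(d,1)\}$ with $\tau((a,1))=(a,1)(c,1)(d,1)$, $\tau((c,1))=(c,1)$, $\tau((d,1))=(d,1)(d,1)$, whose fixed point maps under $\varphi$ to $0\,1\,2\,1\,2^2\,1\,2^4\cdots\neq x$: the occurrences of $1$ coming from $\sigma^n(b)$ are lost, because $b$ contributes no letter of $C$ while $\delta(\sigma(b))\neq\epsilon$. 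The subsequent steps (the intertwining $\tau\circ\delta=\delta\circ\sigma$, the fixed-point identity $\tau(\delta(y))=\delta(y)$, and the prefix argument showing $z=\delta(y)$) all rest on this false identity, so the construction genuinely fails, not just the write-up.

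The missing ingredient is one more normalization, implicit in the Cassaigne--Nicolas argument the paper defers to: one must first arrange that $\phi(b)=\epsilon$ implies $\phi(\sigma(b))=\epsilon$ for every letter (equivalently, get rid of letters whose $\phi$-image is empty but whose $\sigma$-iterates are not $\phi$-erased). Thanks to (P2) this is cheap and algorithmic: replace $\phi$ by $\phi\circ\sigma$. Then $\phi(\sigma(b))=\epsilon$ forces every letter occurring in $\sigma(b)$, hence (by (P2)) every letter occurring in $\sigma^{2}(b)$, to have empty $\phi$-image, so the implication holds; moreover \eqref{eq:cassaignenicolas} is preserved (its second inequality is additive over the letters of $\sigma(b)$, and the first follows, as in the paper, from the infinitude of $x$ together with (P2)). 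With this extra step your intertwining relation holds on all of $A$, and the rest of your verification (non-erasingness of $\tau$, prolongability on $(a,1)$, growth of $|\tau^n((a,1))|$, $z=\delta(y)$ and $\varphi(z)=x$) is correct and matches what the paper intends by quoting the algorithmic part of the Cassaigne--Nicolas proof.
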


Thus, in the sequel we suppose $\phi $ is a coding and $\sigma$ is a non-erasing substitution.
We end this section with a technical lemma checking the ultimate periodicity.

\begin{lemma}
\label{lemma:finalcheck}
Let $t\in A^\mathbb{N}$, $\varphi $ be a coding defined on $A^*$, $z=\varphi (t)$, and,  $u$ and $v$ be non-empty words.
Then, $z=uv^\omega$ iff and only if for all recurrent words $B = b_1 b_2 \cdots b_{2|v|}\in \L (t)$, where the $b_i$'s are letters, there exist $r_B\in \{ 0,1,2 \}$, $s_B$ and $p_B$ such that

\begin{enumerate}
\item
$\varphi (B) = s_{B}v^{r_{B}}p_{B}$ where $s_{B}$ is a suffix of $v$ and $p_{B}$ a prefix of $v$, and, 
\item
for all recurrent words $BB' \in \L (t)$, where $B'$ is a word of length $2|v|$, $p_{B}s_{B'}$ is equal to $v$ or the empty word.
\end{enumerate}
\end{lemma}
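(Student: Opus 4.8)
The plan is to prove both implications by reducing the ultimate-periodicity condition on $z$ to a statement about the factors of $z$ of length $2|v|$, and then to pull this back along the coding $\varphi$ to factors of $t$ of length $2|v|$, using the fact that $\varphi$ is length-preserving.

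First I would establish the forward implication. Suppose $z = uv^\omega$. Then every recurrent factor of $z$ occurs arbitrarily far to the right, hence lies inside the periodic tail $v^\omega$; in particular every recurrent factor of $z$ of length $2|v|$ is a factor of $v^\omega$. Now take any recurrent word $B = b_1 \cdots b_{2|v|}$ in $\L(t)$; since $\varphi$ preserves length and maps recurrent words to recurrent words, $\varphi(B)$ is a recurrent factor of $z$ of length $2|v|$, hence a factor of $v^\omega$. A factor of $v^\omega$ of length $2|v|$ is exactly of the form $s v^{r} p$ with $s$ a proper-or-full suffix of $v$, $p$ a proper prefix of $v$, $|s|+|p| = |v|$ or $|s| = |v|, |p| = 0$, and $r \in \{0,1,2\}$ (the exact bookkeeping: write $|\varphi(B)| = 2|v|$ as (length of a suffix of $v$) $+ r|v| +$ (length of a prefix of $v$)); this is condition (1). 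For condition (2), if $BB' \in \L(t)$ with $|B'| = 2|v|$ and both recurrent, then $\varphi(BB') = \varphi(B)\varphi(B')$ is a recurrent factor of $v^\omega$ of length $4|v|$, and comparing the cut between $\varphi(B)$ and $\varphi(B')$ inside $v^\omega$ forces $p_B s_{B'}$ to be a full period-block $v$ (when the cut falls strictly inside a copy of $v$) or empty (when it falls at a boundary).

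Second I would prove the converse. Assume conditions (1) and (2) hold for all recurrent factors of $t$ of length $2|v|$. The key point is that the tail of $t$ (equivalently, of $z$) is eventually covered by overlapping recurrent factors of length $2|v|$: fix $N$ large enough that every factor of $t$ occurring at a position $\geq N$ is recurrent (such an $N$ exists since there are finitely many non-recurrent factors of any fixed length), and consider the consecutive windows $t_{[N + 2k|v|, N + 2(k+1)|v| - 1]}$ for $k \geq 0$. Each such window $B^{(k)}$ is recurrent, so by (1) $\varphi(B^{(k)}) = s_k v^{r_k} p_k$; by (2) applied to $B^{(k)}B^{(k+1)}$, the junction word $p_k s_{k+1}$ equals $v$ or $\epsilon$. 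Concatenating, $\varphi(t_{[N',\infty)}) = s_0 v^{r_0}(p_0 s_1) v^{r_1}(p_1 s_2)v^{r_2}\cdots$, and since each $p_k s_{k+1} \in \{v,\epsilon\}$ every block in this expansion is a power of $v$, whence the whole tail is a power of $v$ (up to the initial suffix $s_0$, which is itself a suffix of $v$). Therefore $z = \varphi(t)$ has an ultimately periodic tail with period $v$, i.e. $z = u'v^\omega$ for some finite $u'$; replacing $u'$ by $u$ (they differ only by how much of the periodic part one absorbs into the prefix, and one can always adjust the prefix to the given $u$ since $u$ is a word and $v$ is non-empty) gives $z = uv^\omega$.

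The main obstacle I expect is the bookkeeping in the converse direction, specifically making rigorous the claim that the shift of the window cut by exactly $|p_k s_{k+1}| \in \{0,|v|\}$ at each step keeps all subsequent $\varphi(B^{(k)})$ aligned so that $s_k v^{r_k} p_k$ genuinely telescopes into a power of $v$: one must track the ``phase'' of the decomposition relative to the period $v$ and verify it is consistent across infinitely many windows. A clean way to handle this is to argue by induction on $k$ that the prefix $\varphi(t_{[N, N+2(k+1)|v|-1]})$ equals $s_0$ followed by a power of $v$ followed by $p_k$, using (1) for the new window and (2) for the new junction; the base case is (1) for $B^{(0)}$, and the inductive step is a direct concatenation check. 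A minor additional point to dispatch is why one may take $2|v|$ rather than $|v|$ as the window length — this is exactly what makes the junction condition (2) well-posed, since a window of length $2|v|$ always ``sees'' at least one full copy of the period and its two neighboring partial copies, pinning down $r_B \in \{0,1,2\}$.
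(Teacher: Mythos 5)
The paper gives no argument for this lemma (its proof is literally left to the reader), so your proposal can only be judged on its own terms; it is the natural argument and is essentially correct, but three points need repair. First, the last sentence of your converse direction is false as stated: from $z=u'v^\omega$ you cannot ``adjust the prefix to the given $u$'' (take $z=ba^\omega$, $v=a$, $u=bb$). Since the right-hand side of the lemma never mentions $u$, the equivalence can only be read with $u$ existentially quantified -- which is exactly how it is used in the proof of Theorem \ref{theo:decidgrowing} -- and under that reading your $u'$ already finishes the argument; the adjustment step should simply be deleted. Second, in the converse you should choose $N$ so that every factor of the \emph{fixed} length $4|v|$ occurring at a position $\geq N$ is recurrent (a single $N$ working for all lengths simultaneously need not exist, e.g.\ for $t=ba b aab aaab\cdots$); this fixed-length choice is what guarantees that the junction words $B^{(k)}B^{(k+1)}$, and not only the windows $B^{(k)}$, are recurrent, which you need in order to invoke condition (2), and it also makes each $B^{(k)}$ recurrent since a factor of a recurrent word is recurrent.

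Third, and most substantively, in the forward direction condition (2) requires the \emph{single} triple $(r_B,s_B,p_B)$ chosen for the word $B$ in condition (1) to be compatible with every recurrent junction $BB'$ and $B''B$ at once; your phrase ``comparing the cut between $\varphi(B)$ and $\varphi(B')$ inside $v^\omega$'' tacitly assumes that the decomposition chosen for $\varphi(B)$ is the one induced by that particular occurrence of $\varphi(BB')$. If $v$ is primitive this is harmless: a factor of $v^\omega$ of length $2|v|$ occurs at a unique phase modulo $|v|$ (two occurrences at distance $d$ with $0<d<|v|$ overlap in more than $|v|$ letters, so by Fine and Wilf the factor, hence an aligned copy of $v$ inside it, would have a period dividing $|v|$ and smaller than $|v|$), so the canonical decomposition attached to that phase automatically satisfies (2) at every junction. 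If $v=w_0^k$ is a proper power, the phase is only determined modulo $|w_0|$, and you must make a coherent choice, for instance taking for every recurrent $B$ the decomposition coming from the occurrence of $\varphi(B)$ at its phase modulo $|w_0|$, and observing that this phase agrees for $B$ and $B'$ whenever $BB'$ is recurrent (the offset $2|v|$ is a multiple of $|w_0|$). With these repairs the proof is complete.
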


\begin{proof}
The proof is left to the reader.
\end{proof}

\subsection{The case of substitutive sequences with respect to growing substitutions}

In the sequel we suppose $\sigma$ is a growing substitution.
From Corollary \ref{coro:deciderase} it is decidable to know whether we are in this situation.

We recall that from the previous section we can suppose $\phi$ is a coding and that $\sigma $ satisfies 
(P1), (P2), (P3) and (P4).

\begin{lemma}
\label{lemma:langequal}
Let $u$ and $v$ be two words.
It is decidable to check whether or not $\L(u^\omega)$ is equal to $\L (v^\omega)$. 
\end{lemma}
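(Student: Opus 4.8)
The plan is to reduce the equality $\L(u^\omega)=\L(v^\omega)$ to finitely many effectively checkable conditions by observing that both languages are determined by finite data. First I would note that $\L(u^\omega)$ is a factorial language which, for each length $n$, contains at most $|u|$ words of that length (the factors of $u^\omega$ of length $n$ are exactly the $|u|$ cyclic rotations read off from $u^\omega$ at offsets $0,1,\ldots,|u|-1$), and similarly $\L(v^\omega)$ has at most $|v|$ words of each length. Moreover $\L(u^\omega)$ is entirely recoverable from its set of factors of length, say, $N=2\max(|u|,|v|)$: a factorial language all of whose factors of length $N$ extend (to the right) within the set, and which is generated by a periodic word of period $\le N/2$, is completely pinned down by $\L_N$ together with the periodicity. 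So it suffices to compare the two languages at one sufficiently large length.

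The key steps, in order, are: (1) compute $\L_{N}(u^\omega)$ and $\L_{N}(v^\omega)$ directly from $u$ and $v$ (finite computations); (2) if these two finite sets differ, output ``not equal''; (3) if they coincide, argue that $\L(u^\omega)=\L(v^\omega)$ in full. For step (3) the cleanest route is: if $\L_N(u^\omega)=\L_N(v^\omega)$ then in particular the word $u^2$ (of length $2|u|\le N$) lies in $\L(v^\omega)$, hence $u^2$ is a factor of $v^\omega$; since $|u^2|\ge |u|+|v|$ when $|u|\ge|v|$ (and symmetrically), the Fine--Wilf theorem forces $u^\omega$ and $v^\omega$ to be shifts of one another, whence $\L(u^\omega)=\L(v^\omega)$. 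If instead $|u|<|v|$, swap the roles. One small wrinkle: to invoke Fine--Wilf profitably I want a common factor of length at least $|u|+|v|-\gcd(|u|,|v|)$; taking $N=2(|u|+|v|)$ comfortably guarantees that $u^2$ (and $v^2$) sit inside the compared window and that any common factor witnessing the equality is long enough.

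I expect the main obstacle to be purely bookkeeping: making precise the claim that $\L(u^\omega)$ is recovered from $\L_N(u^\omega)$, i.e.\ ruling out the a priori possibility that two periodic words share all short factors but diverge later. This cannot happen once the shared factor exceeds the Fine--Wilf bound, so the real content is just choosing $N$ correctly and citing Fine--Wilf; there is no combinatorial explosion, and every quantity involved ($|u|$, $|v|$, $N$, the two factor sets) is finite and explicitly computable, which yields the decidability.
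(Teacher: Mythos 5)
Your argument is correct. Note that the paper itself states Lemma \ref{lemma:langequal} without any proof, treating it as a routine fact, so there is no ``official'' argument to compare against; your reduction to a finite check is a perfectly valid way to discharge it. The essential content is exactly where you locate it: comparing the factor sets at one sufficiently large length $N$ is sound because equality of the full languages trivially forces equality of the length-$N$ slices, and conversely, if the slices agree then $u^2$ (respectively $v^2$) is a factor of $v^\omega$ (respectively $u^\omega$), and Fine--Wilf applied to $u^2$, which has periods $|u|$ and $|v|$ and length $2|u|\ge |u|+|v|$ in the case $|u|\ge|v|$, yields the period $d=\gcd(|u|,|v|)$. The one step you compress is the passage from ``$u^2$ has period $d$'' to ``$u^\omega$ and $v^\omega$ are shifts of one another'': you should spell out that $d$ divides $|u|$, so $u$ is a power of its length-$d$ prefix $p$ and $u^\omega=p^\omega$, and that $u^2$, being a $d$-periodic factor of $v^\omega$ of length at least $|v|$, contains a full conjugate of $v$ which is then a power of a rotation of $p$, so $v^\omega$ is a shifted copy of $p^\omega$; this is standard but worth writing. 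A slightly shorter route to the same decidability, for comparison, is to observe that $\L(u^\omega)=\L(v^\omega)$ if and only if the primitive roots of $u$ and $v$ are conjugate words, and conjugacy of two words $p,q$ is decidable by checking $|p|=|q|$ and whether $p$ occurs in $qq$; both approaches are elementary and effective, and yours has the mild advantage of not needing the notion of primitive root. (For completeness you may also want to dispose of the degenerate case of an empty word, which does not occur in the paper's application since $u$ and $v$ are word periods there.)
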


\begin{lemma}
\label{lemma:recletter}
The set of recurrent letters in $\sigma^\omega (a)= c_0 c_1 \cdots $ is algorithmically computable.
Moreover there is a computable $i$ such that all letters occurring in $c_i c_{i+1} \cdots $ are recurrent. 
\end{lemma}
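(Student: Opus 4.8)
The plan is to exploit the canonical block decomposition of the fixed point. Write $\sigma(a)=au$ with $u\in A^+$ (legitimate because $\sigma$ is prolongable on $a$), and for a word $w$ write $\mathrm{Alph}(w)$ for the set of letters occurring in $w$. A standard induction gives $\sigma^n(a)=a\,u\,\sigma(u)\,\sigma^2(u)\cdots\sigma^{\,n-1}(u)$ for every $n\ge 1$, so, passing to the limit,
\[
\sigma^\omega(a)=a\cdot u\cdot\sigma(u)\cdot\sigma^2(u)\cdot\sigma^3(u)\cdots .
\]
Since $\sigma$ is non-erasing (property (P4)) and $u$ is non-empty, each block $\sigma^k(u)$ is non-empty; hence these blocks occupy consecutive stretches of positions with strictly increasing left endpoints, and if I set $i:=|\sigma(a)|=|au|$ then $c_i c_{i+1}\cdots=\sigma(u)\,\sigma^2(u)\,\sigma^3(u)\cdots$.

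Next I would invoke property (P2): for each letter $b$ the set $\mathrm{Alph}(\sigma^j(b))$ is independent of $j\ge 1$; applying this letter by letter inside $u$, the set $R:=\mathrm{Alph}(\sigma^k(u))$ is the same for all $k\ge 1$, and equals $\mathrm{Alph}(\sigma(u))$. This set is plainly computable: compute $\sigma^2(a)$, delete its prefix $\sigma(a)$ (what remains is exactly $\sigma(u)$), and read off which letters occur. I then claim that $R$ is exactly the set of recurrent letters of $\sigma^\omega(a)$. Indeed, a letter of $R$ occurs in every block $\sigma^k(u)$ with $k\ge 1$, and those blocks sit at arbitrarily large positions, so such a letter is recurrent; conversely, the only blocks of the decomposition that are not of the form $\sigma^k(u)$ with $k\ge 1$ are the two initial ones $a$ and $u$, so a letter outside $R$ can occur only inside the finite prefix $a\cdot u$ and is therefore not recurrent. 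This settles the first assertion, and also the ``moreover'' part: with the computable $i=|\sigma(a)|$ above, $c_i c_{i+1}\cdots=\sigma(u)\sigma^2(u)\cdots$ has letter set $\bigcup_{k\ge 1}\mathrm{Alph}(\sigma^k(u))=R$, i.e.\ only recurrent letters occur in it.

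There is no real obstacle here: the single place where the hypotheses enter is the stabilization of $\mathrm{Alph}(\sigma^k(u))$ for $k\ge 1$, which is precisely what (P2) supplies, while (P4) is only needed so that the blocks $\sigma^k(u)$ are non-empty and thus genuinely escape to infinity. (Without (P2) one could still argue by noting that $S\mapsto\bigcup_{b\in S}\mathrm{Alph}(\sigma(b))$ is monotone on subsets of $A$, so the sequence $\bigl(\mathrm{Alph}(\sigma^k(u))\bigr)_k$ is eventually periodic and the union over its periodic part remains computable; but under the standing reductions (P2) makes this detour unnecessary.)
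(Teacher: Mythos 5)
Your proof is correct and follows essentially the same route as the paper: decompose $\sigma^\omega(a)=a\,u\,\sigma(u)\,\sigma^2(u)\cdots$ with $\sigma(a)=au$, use (P2) to see that the letters occurring in the blocks $\sigma^k(u)$, $k\ge 1$, are exactly those of $\sigma(u)$, and conclude that these are precisely the recurrent letters, with the computable cutoff $i=|\sigma(a)|$. The paper's proof is just a terser version of the same argument.
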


\begin{proof}
Let $\sigma (a) = au$.
Then, $\sigma^\omega (a) = a u \sigma (u) \sigma^2 (u) \cdots $.
Thus, from (P2), a letter is recurrent if and only if it appears in $\sigma (u)$.
Moreover, all letters occurring in $\sigma (u) \sigma^2 (u) \cdots $ are recurrent.
\end{proof}

\begin{lemma}
\label{lemma:suboflengthn}
The set of recurrent words of length $n$ in $\sigma^\omega (a) = c_0c_1\cdots $ is algorithmically computable. 
Moreover there is a computable $i$ such that all words of length $n$ occurring in $c_i c_{i+1} \cdots $ are recurrent. 

\end{lemma}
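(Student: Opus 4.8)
The plan is to leverage Lemma~\ref{lemma:recletter} together with the self-similar structure coming from (P2). First I would recall that $\sigma^\omega(a) = a u \sigma(u)\sigma^2(u)\cdots$ where $\sigma(a)=au$, and that by Lemma~\ref{lemma:recletter} there is a computable index, say $i_0$, beyond which every letter occurring is recurrent. Since $\sigma$ is a substitution, $\sigma^\omega(a)$ is a fixed point, so $\sigma^\omega(a) = \sigma^k(a)\sigma^k(c_{|\sigma^k(a)|})\cdots$ for every $k$; choosing $k$ large enough that $|\sigma^k(a)| \geq i_0$, I can write $\sigma^\omega(a) = P_k\, T_k$ where $P_k = \sigma^k(a)$ is a computable prefix and $T_k$ is the tail in which every occurring letter is recurrent. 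The point is that a word of length $n$ occurring in $T_k$ automatically recurs: it sits inside some $\sigma^k(c_j)$ or straddles the boundary between $\sigma^k(c_j)$ and $\sigma^k(c_{j+1})$, and since $c_j, c_{j+1}$ are recurrent letters, any such configuration is reproduced infinitely often (using (P2) to ensure the letters, hence the boundary factors, stabilize).

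The key steps, in order, would be: (1) fix $k$ with $|\sigma^k(a)|\geq \max(i_0,n)$, computable by Lemma~\ref{lemma:recletter}; (2) set $i = |\sigma^k(a)|$, so that $c_ic_{i+1}\cdots$ is the image under $\sigma^k$ of a tail of $\sigma^\omega(a)$ all of whose letters are recurrent; (3) show every length-$n$ factor of $c_ic_{i+1}\cdots$ is recurrent, by observing that such a factor lies within $\sigma^k(c_jc_{j+1})$ for some $j\geq$ (some bound), and since the pair $c_jc_{j+1}$ — indeed every recurrent pair of consecutive recurrent letters — occurs infinitely often in $\sigma^\omega(a)$, and $\sigma^k$ applied to it always yields the same block, that length-$n$ factor occurs infinitely often; (4) for the computability of the \emph{set} of recurrent words of length $n$: the non-recurrent ones all occur inside the finite prefix $c_0\cdots c_{i+n-2}$, so one lists all length-$n$ factors of $\sigma^\omega(a)$ up to a computable finite horizon (using that a factor is recurrent iff it occurs in $c_ic_{i+1}\cdots$, which one checks by generating enough of the sequence), and the recurrent ones are exactly those appearing at some position $\geq i$.

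I would also note the alternative, slightly cleaner route: the recurrent length-$n$ factors of $\sigma^\omega(a)$ are precisely the length-$n$ factors of the language $\L(\sigma')$ for sub-substitutions $\sigma'$, or more directly, a factor $w$ of $y=\sigma^\omega(a)$ is recurrent iff it occurs in $\sigma^N(\ell)$ for some recurrent letter $\ell$ with $N$ large, or straddles $\sigma^N(\ell)\sigma^N(\ell')$ for a recurrent pair $\ell\ell'$; since $\sigma$ is growing, a single $N$ with $|\sigma^N(\ell)|\geq n$ for all $\ell$ suffices, and this $N$ is computable. Either way the whole argument is effective.

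The main obstacle I expect is making step~(3) fully rigorous without circularity: one must be careful that ``every letter in the tail is recurrent'' does not by itself give that every \emph{word} in the tail is recurrent — the argument genuinely needs that a factor straddling a boundary $\sigma^k(c_j)\sigma^k(c_{j+1})$ is determined by the pair $(c_j,c_{j+1})$ and that such recurrent pairs themselves recur, which is where (P2) (stability of the letter content of iterates) does the real work. Once that structural observation is in place, the computability claims are routine: bound the search to a finite, explicitly computable prefix and enumerate.
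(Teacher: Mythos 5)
Your step (3) is where the proof breaks. You reduce the recurrence of a length-$n$ factor of the tail to the recurrence of the two-letter block $c_jc_{j+1}$ it straddles, and you justify the latter only by the recurrence of the individual letters $c_j$ and $c_{j+1}$ together with (P2). That inference is not valid: (P2) controls which \emph{letters} occur in the iterates $\sigma^j(b)$, hence yields recurrence of letters, but says nothing about which \emph{pairs} of adjacent letters recur. In a general sequence two recurrent letters can be adjacent at a position where that pair never reappears (e.g.\ in $ab(acbc)^\omega$ both $a$ and $b$ are recurrent but the factor $ab$ occurs exactly once), and in the substitutive setting the assertion ``every pair of consecutive letters beyond the letter-recurrence index is recurrent'' is precisely the $n=2$ instance of the lemma you are trying to prove, so as written the argument is circular. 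Your closing paragraph correctly identifies this as the crux, but then merely asserts that (P2) ``does the real work'' without an argument; note also that the threshold beyond which all length-$n$ factors are recurrent genuinely depends on $n$ (junction factors between images of recurrent letters can occur only once, arbitrarily far out), so no purely letter-level fact can settle it. The same objection applies to your ``alternative route'', which invokes ``recurrent pairs $\ell\ell'$'' as if they were already known.

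The paper avoids the straddling problem altogether by the classical $n$-block coding: it first computes the set $A'$ of all length-$n$ factors of $\sigma^\omega(a)$ by a saturation procedure (start from the length-$n$ prefix of $\sigma^n(a)$ and repeatedly apply $\sigma$ until no new length-$n$ words appear), then regards $A'$ as an alphabet and defines the substitution $\sigma_n$ by sliding windows over $\sigma(a_1\cdots a_n)$; this $\sigma_n$ is prolongable on the initial block $c=(c_0\cdots c_{n-1})$ and its fixed point is the sliding-block sequence of $\sigma^\omega(a)$, so a word of length $n$ is recurrent in $\sigma^\omega(a)$ if and only if the corresponding letter is recurrent in $\sigma_n^\omega(c)$, and Lemma~\ref{lemma:recletter} applies verbatim. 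If you wish to keep your direct route you must actually prove the recurrence of the boundary blocks (by induction on $n$ or by exactly this coding), at which point you have reproduced the paper's proof. Two further points: your procedure for deciding membership in the recurrent set needs an explicit computable horizon for where a recurrent factor must reappear (the paper gets effectiveness from the saturation step plus the block coding), and the lemma is also used later for non-growing $\sigma$, where your reduction ``the factor sits inside $\sigma^k(c_jc_{j+1})$'' fails, whereas the block-coding proof does not require growth.
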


\begin{proof}
Let $n\in \mathbb{N}$.
Let $w_0$ be the prefix of length $n$ of $\sigma^n (a)$. 
Let $w_1,\dots , w_{j_1}$ be the words of length $n$ appearing in $\sigma (w_0)$. 
Then we do the same for $w_1$.
We obtain some new words of length $n$: $w_{j_1+1}, \dots , w_{j_2}$.
We proceed similarly with $w_2, w_3$ and so on, until all the $w_i$ are handled and no new words appear. 
At this point, the set $A'$ of all collected words is the set of all words of length $n$ occurring in $\sigma^\omega (a)$. 

It remains to find the words in $A'$ that are recurrent in $\sigma^\omega (a)$.

Consider $A'$ as a new alphabet and $\sigma_n : A'^* \rightarrow A'^*$
the endomorphism defined, for all $(a_1\cdots
  a_n)$ in $A'$, by
$$
\sigma_n ((a_1\cdots a_n)) = (b_1\cdots b_n)(b_2\cdots
b_{n+1})\cdots (b_{|\sigma (a_1)|}\cdots b_{|\sigma (a_1)|+n-1})
$$
where $\sigma(a_1\cdots a_n) = b_1\cdots b_k$. 
Let $\sigma^\omega (a) = c_0 c_1 \cdots$, with $c_i \in A$, $i\geq 0$.
It is easy to check that $\sigma_n $ is prolongable on $c=(c_0 c_1 \cdots c_{n-1})$ and that 

$$
\sigma_n^\omega (c) = (c_0\cdots c_{n-1})(c_1\cdots c_{n})(c_2\cdots c_{n+1}) \cdots .
$$

For details, see Section V.4 in \cite{Queffelec:1987}.
Thus a word $w$ of length $n$ is recurrent in $\sigma^\omega (a)$ if and only if $(w)$ (which is a letter of $A'$) is recurrent in $\sigma_n^\omega (c)$.

We achieve the proof using Lemma \ref{lemma:recletter}
\end{proof}

\begin{theo}
\label{theo:decidgrowing}
The HD0L ultimate periodicity problem is decidable for substitutive sequences w.r.t. growing substitutions.
Moreover, some $u$ and $v$ in the description of the problem can be computed.
\end{theo}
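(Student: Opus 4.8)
The plan is to reduce ultimate periodicity of $x=\phi(\sigma^\omega(a))$ to the primitive case (Theorem~\ref{th:decidprim}) via the primitive component decomposition, and then to a finite check using Lemma~\ref{lemma:finalcheck}; throughout I may assume $\phi$ is a coding and $\sigma$ a growing, non‑erasing substitution prolongable on $a$ satisfying (P1)--(P4), and I write $y=\sigma^\omega(a)$. First I would compute, by Proposition~\ref{prop:decomprim} (the incidence matrix of $\sigma$ already has the required shape by (P1)), the primitive component partition $\{A_i\}$ and isolate the principal components $A_{q+1},\dots,A_l$. Since $\sigma$ is growing and non‑erasing, for $i\ge q+1$ the block $M_i$ is positive and equal neither to $0$ nor to $[1]$, and (P1) makes each such $A_i$ stable under $\sigma$; hence $\sigma_i:=\sigma_{/A_i}$ is a primitive endomorphism of $A_i^*$, and by Corollary~\ref{coro:subsub} a common, computable power $\tau_i:=\sigma_i^{k}$ is a primitive substitution prolongable on a computable letter $b_i\in A_i$. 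As $b_i$ occurs in $y$ by (P3) and $y=\sigma(y)$, every $\tau_i^n(b_i)$ is a factor of $y$, so $\L(\tau_i)=\L(\tau_i^\omega(b_i))\subseteq\L(y)$. Writing $\phi_i$ for the restriction of $\phi$ to $A_i^*$ (a coding) and $x_i:=\phi_i(\tau_i^\omega(b_i))$, the sequence $x_i$ is uniformly recurrent (as $\tau_i$ is primitive), hence ultimately periodic iff periodic; by Theorem~\ref{th:decidprim} I decide this and, when it holds, compute a word period of $x_i$ of minimal length $\pi_i$. If some $x_i$ is not periodic, then $x$ is not ultimately periodic: if $x=uv^\omega$, Lemma~\ref{lemma:subper} applied to the sub‑substitution $\tau_i$ of $\sigma^k$ (whose language lies in $\L((\sigma^k)^\omega(a))=\L(y)$) gives $\L(x_i)=\phi(\L(\tau_i))\subseteq\L(v^\omega)$, so $x_i$ has at most $|v|$ factors of each length and, being uniformly recurrent, is periodic --- a contradiction. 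In this case the algorithm answers ``no''.

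Assume now that every $x_i$ is periodic. The key observation is that, if $x$ is ultimately periodic with minimal period $p$ and word period $v$ (so $|v|=p$), then $\pi_i=p$ for every principal $i$; in particular the $\pi_i$ are all equal. First, $\L(x_i)\subseteq\L(v^\omega)$ as above, so a factor of $x_i$ of length $p+\pi_i$ is a factor of $v^\omega$, hence has period $p$; having also period $\pi_i$, by the theorem of Fine and Wilf it has period $\gcd(p,\pi_i)$, and since $x_i$ has minimal period $\pi_i$ this forces $\gcd(p,\pi_i)=\pi_i$, i.e.\ $\pi_i\mid p$. Thus $|v|=p$ is a common multiple of the $\pi_i$. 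Second, a factor of $x_i$ of length $2p$ is a factor of $v^\omega$, hence equals $s\,v\,p'$ with $s$ a suffix and $p'$ a prefix of $v$; this word has period $\pi_i$, and the central occurrence of $v$ is a length‑$p$ window of it, so $v$ itself has period $\pi_i$. Since $p$ is a common multiple of the $\pi_i$, Fine and Wilf applies to these periods and gives that $v$ has period $g:=\gcd_i\pi_i$; then $v^\omega=u^\omega$ for the length‑$g$ prefix $u$ of $v$, so $u$ is a word period of $x$ and minimality gives $p\le g$, while $\pi_i\mid p$ gives $p\ge\max_i\pi_i\ge g$; hence $p=g=\pi_i$ for all $i$. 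Consequently, if the $\pi_i$ are not all equal then $x$ is not ultimately periodic (answer ``no''); otherwise let $\pi$ be their common value, and any ultimately periodic $x$ then has a word period of length $\pi$.

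Granting this, I would finish as follows: for each of the finitely many words $v$ over $B$ of length $\pi$, test whether $x=uv^\omega$ for some $u$ by Lemma~\ref{lemma:finalcheck}. This test is effective because, by Lemma~\ref{lemma:suboflengthn}, the sets of recurrent words of $y$ of lengths $2|v|$ and $4|v|$ are computable, so conditions (1)--(2) there are decidable. If the test succeeds for some $v$, then $x=uv^\omega$ and a suitable $u$ is read off a sufficiently long (computable) prefix $\phi(\sigma^N(a))$ of $x$, and I output $u$ and $v$; if it fails for every such $v$, I output ``no''. The step I expect to require the most care is the period‑arithmetic claim of the second paragraph --- showing that the common value $\pi$ of the $\pi_i$ is the only possible period length of $x$, which is precisely what makes the final search finite --- together with the routine‑but‑fiddly bookkeeping needed to keep every item above effective (the propagation‑of‑periods invocations, the preservation of uniform recurrence under codings, and the recovery of the preperiod $u$); the structural reductions themselves are mechanical given Proposition~\ref{prop:decomprim} and Corollary~\ref{coro:subsub}, and the final verification is mechanical given Lemmas~\ref{lemma:finalcheck} and~\ref{lemma:suboflengthn}.
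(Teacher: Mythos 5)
Your proof is correct, and its skeleton is the paper's: decompose via Proposition \ref{prop:decomprim} and Corollary \ref{coro:subsub} into primitive sub-substitutions on the principal components, decide periodicity of each $\phi(\tau_i^\omega(b_i))$ by Theorem \ref{th:decidprim} (rejecting, via Lemma \ref{lemma:subper} and uniform recurrence, if any of them fails to be periodic), and finish with Lemmas \ref{lemma:suboflengthn} and \ref{lemma:finalcheck}. The one place you genuinely diverge is how the candidate period is pinned down. The paper takes the word periods $w(\sigma_i)$ that Theorem \ref{th:decidprim} outputs and checks, by Lemma \ref{lemma:langequal}, that all the languages $\L(w(\sigma_i)^\omega)$ coincide; this hands it a single explicit candidate $v$ and a single application of Lemma \ref{lemma:finalcheck}. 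You instead prove a Fine--Wilf statement (if $x=uv^\omega$ with minimal period $p$, then each component's minimal period $\pi_i$ divides $p$ and $v$ itself has period $\pi_i$, forcing $\pi_i=p$ for all $i$), reject if the $\pi_i$ disagree, and otherwise enumerate all $|B|^{\pi}$ words of the common length $\pi$, testing each with Lemma \ref{lemma:finalcheck}. Your variant avoids Lemma \ref{lemma:langequal} entirely and makes explicit why the component periods determine the only possible period length of $x$ (a point the paper leaves largely implicit), at the cost of an extra period-arithmetic lemma and an exponential search over candidate period words; the paper's route is more economical because Theorem \ref{th:decidprim} already supplies a concrete word period, so one language-equality check suffices. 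The effectivity points you flag but gloss over (extracting the minimal period $\pi_i$ from the word period given by Theorem \ref{th:decidprim}, the lifting of a period of a long factor to all of $x_i$ in your divisibility step, and bounding the preperiod $u$ via the computable index in Lemma \ref{lemma:suboflengthn}) are routine and do not affect correctness.
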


\begin{proof}
In this proof we suppose $\sigma$ is growing.
Let us use the notation of Proposition \ref{prop:decomprim}.
From Corollary \ref{coro:subsub}, taking a power of $\sigma$ (less than $|A|^{|A|}$) if needed, we can suppose that for all $i\geq q+1$ the endomorphism $\sigma_i : A_i^* \to A_i^*$  defines a primitive sub-substitution w.r.t. some letter $a_i\in A_i$.
We recall that all sub-substitutions are primitive.
We notice that, in the growing case, there is at least one sub-substitution.

Observe that for all $i\geq q+1$ and $b\in A_i$, the word $\sigma^n (b)=\sigma_i^n (b)$ is recurrent in $\sigma^\omega (a)$.
Thus, to check the periodicity of $x$, we start checking with Theorem \ref{th:decidprim} that, for all $i\geq q+1$, the sequence $\phi (\sigma_i^\omega (a_i ))$ is periodic.
We point out that when the language is periodic then a word period $w(\sigma_i)$ can be computed.
If for some $\sigma_i$ the sequence $\phi (\sigma_i^\omega (a_i ))$ is not periodic then $x$ cannot be ultimately periodic.
Indeed, suppose $x = uv^\omega$.
As longer and longer words occurring in $\phi (\sigma_i^\omega (a_i ))$ occurs in $x$, the uniform recurrence would imply that $\phi (\sigma_i^\omega (a_i )) = v^\omega $.

Then, we check that all the languages $\L (w(\sigma_i)^\omega  )$ are equal using Lemma \ref{lemma:langequal}. 
From Lemma \ref{lemma:subper}, if this checking fails, then $x$ is not periodic.

Hence we suppose it is the case : There exists a word $v$ that is algorithmically given by Theorem \ref{th:decidprim} such that $\phi ( (w(\sigma_i))^\omega ) = \L(v^\omega )$ for all $i$.
Consequently, we should check whether there exists $u$ such that $x=uv^\omega$.

We conclude using Lemma \ref{lemma:suboflengthn} and Lemma \ref{lemma:finalcheck}.
\end{proof}

\subsection{The case of substitutive sequences with respect to non-growing substitutions}
\label{subsec:nonnonnon}

In the sequel we suppose that $\sigma$ is a non-growing substitution.
From Corollary \ref{coro:deciderase} it is decidable to know whether we are in this situation.
We recall that from the previous section we can suppose $\phi$ is a coding and that $\sigma $ satisfies 
(P1), (P2), (P3) and (P4).

\begin{lemma}
\label{lemma:pansiot}
\cite[Th\'eor\`eme 4.1]{Pansiot:1984}
The substitution $\sigma$ satisfies exactly one one the following two statements.
\begin{enumerate}
\item
\label{lemma:enum:pansiot2}
The length of words (occurring in $\sigma^\omega (a)$) consisting of non-growing letters is bounded.
\item
\label{lemma:enum:pansiot3}
There exists a growing letter $b\in A$, occurring in $\sigma^\omega (a)$, such that $\sigma (b) =  vbu$ (or $ubv$) with $u\in C^*\setminus \{ \epsilon \}$ where $C$ is the set of non-growing letters.
\end{enumerate}
Moreover, in the situation \eqref{lemma:enum:pansiot2} the sequence $\sigma^\omega (a)$ can be algorithmically defined as a  substitutive sequence w.r.t. a growing substitution.
\end{lemma}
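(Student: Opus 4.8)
The statement has two parts. That exactly one of \eqref{lemma:enum:pansiot2}, \eqref{lemma:enum:pansiot3} holds I would take from \cite{Pansiot:1984}; it then remains to check the alternatives are mutually exclusive and that one can decide which holds, and to prove the ``moreover''. Recalling that we may assume (P1)--(P4), so in particular $\sigma$ is non-erasing and (by Corollary~\ref{coro:deciderase}) we know the set $C$ of non-growing letters, the exclusion is easy: if $\sigma(b)=ubv$ (resp.\ $vbu$) with $u\in C^{+}$, then $\sigma^{n}(b)$ contains the factor $\sigma^{n-1}(u)\cdots\sigma(u)u$ (resp.\ $u\sigma(u)\cdots\sigma^{n-1}(u)$), which lies in $C^{*}$ because $\sigma(C)\subseteq C^{*}$ and has length $\ge n$ because $\sigma$ is non-erasing; since $b$ occurs in $\sigma^\omega(a)$ (by (P3) every letter does) so does $\sigma^{n}(b)$, hence \eqref{lemma:enum:pansiot2} fails. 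Thus \eqref{lemma:enum:pansiot3} is a finite condition testable on the images of the growing letters, and if it fails Pansiot's dichotomy places us in case \eqref{lemma:enum:pansiot2}.

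Assume now \eqref{lemma:enum:pansiot2}. Since $a$ is growing, $\sigma^\omega(a)$ has infinitely many occurrences of growing letters, so it factors uniquely into \emph{blocks}, a block being a growing letter followed by the maximal (possibly empty) run of non-growing letters before the next growing letter; by \eqref{lemma:enum:pansiot2} blocks have bounded length. Using $\sigma(C)\subseteq C^{*}$ and that $\sigma(d)$ contains a growing letter whenever $d$ is growing, $\sigma$ sends a block $\beta=dw$ ($d$ growing, $w\in C^{*}$) to $\sigma(d)\sigma(w)=\mathrm{pre}(d)\,d_{1}u_{1}\cdots d_{m}u_{m}\sigma(w)$ with $d_{j}$ growing, $u_{j},\mathrm{pre}(d)\in C^{*}$, $m\ge1$. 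The difficulty is that $\sigma$ does not respect block boundaries --- the prefix $\mathrm{pre}(d)$ really belongs to the preceding block's image, and the trailing $C^{*}$-tail must later swallow the $\mathrm{pre}$ of the next block. I would repair this with an enlarged alphabet $\hat{E}=\{(\beta,d'):\beta\text{ a block of }\sigma^\omega(a),\ d'\text{ the growing letter just after that occurrence}\}$ and the morphism $\tau_{0}:\hat{E}^{*}\to\hat{E}^{*}$ given by
\[
\tau_{0}\big((dw,d')\big)=(d_{1}u_{1},d_{2})\,(d_{2}u_{2},d_{3})\cdots(d_{m-1}u_{m-1},d_{m})\,\big(d_{m}u_{m}\sigma(w)\,\mathrm{pre}(d'),\,g(d')\big),
\]
where $g(d')$ is the first growing letter of $\sigma(d')$; the word $u_{m}\sigma(w)\,\mathrm{pre}(d')$ is again a bounded non-growing run of $\sigma^\omega(a)$, so the right-hand side lies in $\hat{E}^{*}$. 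A telescoping computation shows $\pi_{0}(\tau_{0}^{\omega}(e_{0}))=\sigma^\omega(a)$ for $\pi_{0}:\hat{E}^{*}\to A^{*}$, $(dw,d')\mapsto dw$, and $e_{0}$ the pair made of the first block and the first letter of the second; $\tau_{0}$ is prolongable on $e_{0}$ since $\mathrm{pre}(a)=\epsilon$, and growing since every block starts with a growing letter and blocks are bounded, so $|\tau_{0}^{n}(\cdot)|\to\infty$. Because $\hat{E}$ is finite (here \eqref{lemma:enum:pansiot2} is used), $\hat{E}$ and $\tau_{0}$ are computed by saturation --- start from $e_{0}$, iterate $\tau_{0}$, collect new pairs until none appear --- which halts without any a priori bound on run lengths.

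Finally $\pi_{0}$ is non-erasing but not a coding, so I would finish by the standard transformation from a morphic sequence defined by a non-erasing morphism to a substitutive one, carried out on the alphabet of pairs $(\hat{e},i)$ with $\hat{e}\in\hat{E}$ and $0\le i<|\pi_{0}(\hat{e})|$ (see \cite{Cassaigne&Nicolas:2003,Allouche&Shallit:2003}): this effectively produces a substitution $\tau$ and a coding $\pi$ with $\sigma^\omega(a)=\pi(\tau^{\omega}(e))$, and $\tau$ is still growing precisely because $\pi_{0}$ is non-erasing, so lengths cannot collapse (pass to a power of $\tau$ if needed to keep it prolongable). The one genuinely delicate step is the block-boundary bookkeeping packaged into $\hat{E}$: turning the induced map on blocks into an honest morphism even though $\sigma$ slices blocks apart. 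Everything else --- knowing $C$, testing \eqref{lemma:enum:pansiot3}, building $\hat{E}$ by saturation, and the coding reduction --- is routine.
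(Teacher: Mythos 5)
Your proposal is correct in substance, but it takes a different route from the paper simply because the paper offers no proof at all: Lemma~\ref{lemma:pansiot} is quoted verbatim from Pansiot's Th\'eor\`eme 4.1 in \cite{Pansiot:1984}, including the algorithmic ``moreover'' part, and only the case-distinction test is treated separately (Lemma~\ref{lemma:decidpansiot}, where the paper, like you, observes that \eqref{lemma:enum:pansiot3} is a finite syntactic condition once the non-growing letters are known via Corollary~\ref{coro:deciderase}). What you have written is essentially a reconstruction of Pansiot's own argument: the mutual-exclusion computation ($\sigma^{n}(b)$ containing the $C^{*}$-factor $u\sigma(u)\cdots\sigma^{n-1}(u)$ of length $\geq n$) is exactly the computation the paper itself reuses later in the non-growing case, and the block construction (growing letter plus the following non-growing run, enlarged to pairs carrying the next growing letter so that the morphism respects block boundaries, with the alphabet obtained by saturation) is the classical grouping device behind Pansiot's theorem. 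Your version buys a self-contained, visibly algorithmic proof of the ``moreover'' statement, whereas the paper buys brevity by delegating it entirely to \cite{Pansiot:1984}. Two small points you gloss over are fixable in a line each: prolongability of $\tau_{0}$ on $e_{0}$ needs $\sigma(a)$ to contain a second growing letter, which indeed follows in case \eqref{lemma:enum:pansiot2} (otherwise $\sigma^{\omega}(a)$ would have an infinite non-growing tail, and this is also the correct reason why growing letters recur, rather than merely ``$a$ is growing''); and the final passage from the non-erasing block projection $\pi_{0}$ to a coding requires the inequalities \eqref{eq:cassaignenicolas} for the pair $(\tau_{0},\pi_{0})$, which do hold here since $|\pi_{0}(\tau_{0}(\hat{e}))|\geq 1+|\sigma(w)|\geq|\pi_{0}(\hat{e})|$ with strict growth at $e_{0}$, so the algorithmic second part of \cite{Cassaigne&Nicolas:2003} applies and preserves growth.
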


\begin{lemma}
\label{lemma:decidpansiot}
It is decidable to know whether $\sigma$ satisfies \eqref{lemma:enum:pansiot2} or \eqref{lemma:enum:pansiot3} of Lemma \ref{lemma:pansiot}.
\end{lemma}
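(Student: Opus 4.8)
The plan is to observe that condition \eqref{lemma:enum:pansiot3} of Lemma \ref{lemma:pansiot} is a purely local, finitely checkable property of $\sigma$, and then to let the dichotomy of Lemma \ref{lemma:pansiot} do the rest: deciding whether \eqref{lemma:enum:pansiot3} holds automatically decides whether \eqref{lemma:enum:pansiot2} holds, since exactly one of the two is true.

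First I would compute, using Corollary \ref{coro:deciderase}, the set $C$ of non-growing letters and its complement, the set of growing letters. Under the standing assumptions (P1)--(P4), Property (P3) guarantees that every letter of $A$ occurs in $\sigma^\omega(a)$, so the clause ``occurring in $\sigma^\omega(a)$'' appearing in \eqref{lemma:enum:pansiot3} is automatically satisfied and may be ignored; (if one preferred not to invoke (P3), the set of letters occurring in $\sigma^\omega(a)$ is in any case computable from $\sigma(a)=au$ together with (P2)). Then, for each of the finitely many growing letters $b\in A$, I would inspect the single word $\sigma(b)$ and test whether it admits a factorization $vbu$ or $ubv$ with $u\in C^*\setminus\{\epsilon\}$. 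Since a block of non-growing letters cannot contain the growing letter $b$, this reduces to two finite checks: whether the suffix of $\sigma(b)$ following the rightmost occurrence of $b$ is non-empty and lies in $C^*$, and, symmetrically, whether the prefix of $\sigma(b)$ preceding the leftmost occurrence of $b$ is non-empty and lies in $C^*$. Condition \eqref{lemma:enum:pansiot3} holds if and only if one of these checks succeeds for at least one growing letter $b$, and this is manifestly a terminating computation.

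Since Lemma \ref{lemma:pansiot} asserts that $\sigma$ satisfies exactly one of \eqref{lemma:enum:pansiot2} and \eqref{lemma:enum:pansiot3}, the procedure above decides which. The only point that at first looks like an obstacle is that \eqref{lemma:enum:pansiot2} itself --- boundedness of the lengths of blocks of non-growing letters occurring in $\sigma^\omega(a)$ --- is not evidently decidable by a bounded search; but Pansiot's dichotomy circumvents this entirely, by trading the global boundedness statement \eqref{lemma:enum:pansiot2} for the negation of the local, finite condition \eqref{lemma:enum:pansiot3}. So there is essentially no difficulty once one recognizes that \eqref{lemma:enum:pansiot3} is the side of the alternative one should test directly.
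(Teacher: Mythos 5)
Your proposal is correct and follows the same route as the paper: the paper's (very terse) proof likewise observes that condition \eqref{lemma:enum:pansiot3} is a finite, directly checkable property of $\sigma$, and then uses the exclusivity of the dichotomy in Lemma \ref{lemma:pansiot} to conclude decidability of \eqref{lemma:enum:pansiot2} as well. You simply spell out the details (computing the non-growing letters via Corollary \ref{coro:deciderase} and reducing the factorization test to the rightmost/leftmost occurrence of $b$ in $\sigma(b)$), which the paper leaves implicit.
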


\begin{proof}
It can be easily algorithmically checked whether we are in the situation \eqref{lemma:enum:pansiot3} of Lemma \ref{lemma:pansiot}. 
Thus it is decidable to know whether we are in situation \eqref{lemma:enum:pansiot2} of Lemma \ref{lemma:pansiot}. 
\end{proof}

\begin{theo}
The HD0L ultimate periodicity problem is decidable for substitutive sequences w.r.t. non-erasing substitutions.
Moreover, some $u$ and $v$ in the description of the problem can be computed.
\end{theo}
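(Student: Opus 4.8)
The plan is to combine Theorem~\ref{theo:decidgrowing}, which already settles the growing case, with Pansiot's dichotomy (Lemma~\ref{lemma:pansiot}) for the remaining non-growing case. By the reductions carried out above we may assume $\phi$ is a coding and $\sigma$ satisfies (P1)--(P4); by Corollary~\ref{coro:deciderase} we can decide whether $\sigma$ is growing, in which case Theorem~\ref{theo:decidgrowing} applies and produces computable $u$ and $v$, so from now on we assume $\sigma$ is non-growing. By Lemma~\ref{lemma:decidpansiot} we can decide which alternative of Lemma~\ref{lemma:pansiot} occurs. In case~\eqref{lemma:enum:pansiot2}, Lemma~\ref{lemma:pansiot} lets us algorithmically write $\sigma^\omega(a)=\rho(\tau^\omega(c))$ for a growing substitution $\tau$ and a coding $\rho$; then $x=(\phi\circ\rho)(\tau^\omega(c))$ and, $\phi\circ\rho$ being again a coding, $x$ is substitutive with respect to the growing substitution $\tau$, so Theorem~\ref{theo:decidgrowing} applies once more. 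It remains to treat case~\eqref{lemma:enum:pansiot3}.

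So suppose we are in case~\eqref{lemma:enum:pansiot3}: there is a growing letter $b$, occurring in $\sigma^\omega(a)$, with $\sigma(b)=vbu$ or $\sigma(b)=ubv$, where $u\in C^+$ and $C$ denotes the set of non-growing letters. Each letter $c$ of $u$ is non-growing, so, as observed while establishing \eqref{eq:cassaignenicolas}, $|\sigma^2(c)|=|\sigma(c)|$ and every letter of $\sigma(c)$ is fixed by $\sigma$; hence $\sigma^k(u)=\sigma(u)$ for all $k\geq 1$. Tracking the occurrence of $b$ in $\sigma^n(b)$ that descends from the central one, an immediate induction shows that $\sigma^n(b)$ contains the factor $u\sigma(u)^{n-1}$ (case $\sigma(b)=vbu$) or $\sigma(u)^{n-1}u$ (case $\sigma(b)=ubv$); in particular $\sigma^n(b)$ contains $\sigma(u)^{n-1}$. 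Since $b$ occurs in $\sigma^\omega(a)=\sigma^n(\sigma^\omega(a))$, the word $\sigma^n(b)$, hence $\sigma(u)^{n-1}$, occurs in $\sigma^\omega(a)$ for every $n$. Setting $w:=\phi(\sigma(u))$, which is non-empty because $\phi$ and $\sigma$ are non-erasing, we conclude that arbitrarily large powers of $w$ occur in $x=\phi(\sigma^\omega(a))$.

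This pins down the period up to a finite ambiguity. If $x=u_0v_0^\omega$ with $v_0$ non-empty, then arbitrarily long powers of $w$ occur in the ultimately periodic word $u_0v_0^\omega$, and a Fine--Wilf argument forces $\L(w^\omega)=\L(v_0^\omega)$; in particular the primitive root of $v_0$ is a conjugate of the primitive root $\rho$ of $w$. I would therefore compute $\rho$ together with its (at most $|\rho|$) conjugates, and for each conjugate $\tilde\rho$ decide, using Lemma~\ref{lemma:finalcheck} with $\varphi=\phi$, $t=\sigma^\omega(a)$ and $v=\tilde\rho$, whether there exists $u_0$ with $x=u_0\tilde\rho^\omega$; the finitely many recurrent factors of $\sigma^\omega(a)$ of the bounded lengths entering that test are computable by Lemma~\ref{lemma:suboflengthn}. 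If some conjugate passes, $x$ is ultimately periodic, and the corresponding words (which are then the $u$ and $v$ of the statement) are read off from a sufficiently long, and computable, prefix of $x$; if no conjugate passes, $x$ is not ultimately periodic. Combined with the growing case, this decides the problem for substitutive sequences with respect to non-erasing substitutions.

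The hard part will be case~\eqref{lemma:enum:pansiot3}: extracting the forced candidate period from the long repeated powers, bounding the residual phase ambiguity to finitely many conjugates by a Fine--Wilf estimate, and finally exhibiting $u_0$ explicitly. The growing case and case~\eqref{lemma:enum:pansiot2} are routine once Theorem~\ref{theo:decidgrowing} and Lemma~\ref{lemma:pansiot} are available.
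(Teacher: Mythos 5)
Your argument follows the paper's proof in all essentials: the growing case via Theorem \ref{theo:decidgrowing}, Pansiot's dichotomy decided by Lemma \ref{lemma:decidpansiot}, and, in case \eqref{lemma:enum:pansiot3}, the observation that the words $\sigma^n(u)$ stabilize so that arbitrarily long powers of a computable word occur in $x$, with the conclusion obtained from Lemma \ref{lemma:suboflengthn} and Lemma \ref{lemma:finalcheck}. The only (harmless) deviations are that you use $\sigma^2(c)=\sigma(c)$ for non-growing $c$ where the paper pigeonholes $\sigma^i(u)=\sigma^j(u)$ and sets $u'=\sigma^i(u)\cdots\sigma^{j-1}(u)$, and that enumerating the conjugates of the primitive root is unnecessary, since Lemma \ref{lemma:finalcheck} already absorbs the phase into the words $s_B$ and $p_B$.
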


\begin{proof}
From Theorem \ref{theo:decidgrowing}, Lemma \ref{lemma:pansiot} and Lemma \ref{lemma:decidpansiot} it remains to consider that $\sigma $ satisfies \eqref{lemma:enum:pansiot3} in Lemma \ref{lemma:pansiot} : Let $b$ be a letter occurring in $\sigma^\omega (a)$ such that $\sigma (b) =  vbu$ (or $ubv$) with $u\in C^*\setminus \{ \epsilon \}$ where $C$ is the set of non-growing letters. 
Then, for all $n$, $\sigma^{n+1} (b) = \sigma^n (v)bu \sigma (u) \cdots \sigma^{n}(u)$.
As the sequence $(|\sigma^n (u)|)_n$ is bounded, there exist $i$ and $j$, $i<j$, such that $\sigma^i (u) = \sigma^j (u)$.
Let $u' = \sigma^i (u) \sigma^{i+1} (u) \cdots  \sigma^{j-1} (u)$.
Then, we get $\L (u'^\omega ) \subset \L (\sigma )$.
We conclude using Lemma \ref{lemma:suboflengthn} and Lemma \ref{lemma:finalcheck}.
\end{proof}

This ends the proof of Theorem \ref{theo:main} for substitutive sequences.

\begin{theo}
\label{theo:perforsub}
Suppose the sequence $x$ is substitutive with respect to $(\sigma ,\phi)$.
Then, it is decidable whether $x$ is ultimately periodic: $x=uv^\omega$ for some $u$ and non-empty $v$.
Moreover, we can compute such $u$ and $v$. 
\end{theo}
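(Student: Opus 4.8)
The plan is to reduce, effectively, to a case already settled in this section — namely substitutive sequences with respect to a non-erasing substitution — and then quote that result. Since $x$ is substitutive with respect to $(\sigma,\phi)$, the morphism $\phi$ is a coding and $x=\phi(\sigma^\omega(a))$ for the letter $a$ on which $\sigma$ is prolongable; in particular $x$ is a morphic sequence. So I would run the effective form of Theorem~\ref{th:cassaignenicolas} established in this section (the algorithm that, from $\phi$ and $\sigma$, produces a coding $\varphi$ and a non-erasing substitution $\tau$, prolongable on a letter $a'$, with $x=\varphi(\tau^\omega(a'))$) on the input $(\sigma,\phi)$, obtaining such a pair $(\tau,\varphi)$.

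Next I would invoke the theorem of the previous subsection — decidability of the HD$0$L ultimate periodicity problem for substitutive sequences with respect to non-erasing substitutions — on the data $(\tau,\varphi)$: it decides whether $x=uv^\omega$ for some words $u$ and non-empty $v$, and when this holds it computes such $u$ and $v$. Here one keeps in mind that that theorem is itself obtained by a case split — the growing case is Theorem~\ref{theo:decidgrowing}, and the non-growing case is handled through Pansiot's dichotomy, Lemma~\ref{lemma:pansiot} and Lemma~\ref{lemma:decidpansiot} — so the present statement is, in the end, a packaging of Theorem~\ref{th:decidprim}, Theorem~\ref{theo:decidgrowing}, and the non-growing analysis. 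Finally, because every reduction used along the way (passing to a power of $\sigma$, restricting to the letters of $A$ that actually occur, eliminating erasing letters, replacing $\phi$ by $\phi\circ\sigma$, and the Cassaigne--Nicolas construction itself) leaves the sequence $x$ unchanged, ultimate periodicity of $x$ is a property of $x$ alone, and any witnesses $(u,v)$ computed from $(\tau,\varphi)$ are witnesses for the original input $(\sigma,\phi)$.

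I do not expect a real obstacle in this last step: the two genuine difficulties lie upstream, namely Theorem~\ref{th:decidprim} (decidability in the primitive case, from \cite{Durand:2012}, which is the heart of the matter) and the algorithmic realisation of the classical reduction to a non-erasing substitution and a coding. The only point deserving a line of care is that this reduction chain is effective from end to end and preserves $x$, which is exactly what guarantees that decidability and the computed period transfer back to $(\sigma,\phi)$; granting that, the proof is immediate.
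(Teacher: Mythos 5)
Your proposal is correct and matches the paper's own treatment: Theorem~\ref{theo:perforsub} is stated there precisely as the summary of Section~\ref{section3}, namely the effective reduction to a coding and a non-erasing substitution (the algorithmic form of Theorem~\ref{th:cassaignenicolas}) followed by the growing/non-growing case split (Theorem~\ref{theo:decidgrowing} and the Pansiot-based argument), with all reductions preserving $x$ so that the computed witnesses $u,v$ transfer back to $(\sigma,\phi)$. Nothing further is needed.
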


\section{Ultimate periodicity of HD$0$L sequences}
\label{section4}

In this section we end the proof of the Theorem \ref{theo:main}: We solve the HD$0$L ultimate periodicity problem.
We use the notation introduced in the input of the problem.
We recall that in the previous section we prove this theorem for a special case of HD$0$L sequences: the substitutive sequences. 
These sequences are very convenient as, by definition, there is no problem with the existence of the limit in the statement of the HD$0$L ultimate periodicity problem.
We gave, in Section \ref{subsec:organization}, an example of an HD$0$L sequence where the sequence $(\sigma^n (a))_n$  does not converge but $(\phi (\sigma^n (a)))_n$ does.

Thus, in the general case, it would be convenient (but not necessary) to be able to decide the existence of the limit.
As we did not succeed to solve this decidability problem, we leave this question as an open problem. 
We proceed in a different way.

Let us consider the input of the HD$0$L ultimate periodicity problem.

\begin{lemma}
\label{lemma:tendsto}
Let $a\in A$.
Suppose $\sigma$ satisfies (P1) and (P2).
Then, it is decidable whether:

\begin{enumerate}
\item
\label{lemma:tendstozero} 
$(|\phi (\sigma^n (a))|)_n$ tends to $0$,
\item
\label{lemma:phisigmainfty}
$(|\phi (\sigma^n (a))|)_n$ tends to infinity.
\end{enumerate}
Moreover, if $(|\phi (\sigma^n (a))|)_n$ does not tend to infinity then it is bounded.
\end{lemma}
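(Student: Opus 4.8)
The plan is to express $|\phi(\sigma^n(a))|$ as $\sum_{d\in A}|\sigma^n(a)|_d\,|\phi(d)|$ — the inner product of the vector of letter-multiplicities of $\sigma^n(a)$ with the vector $(|\phi(d)|)_{d\in A}$ — and then to read off both the convergence to $0$ and the growth of the multiplicities from the block structure of the incidence matrix given by Proposition \ref{prop:decomprim}.

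First I would settle \eqref{lemma:tendstozero}. Since $(|\phi(\sigma^n(a))|)_n$ is a sequence of non-negative integers, it tends to $0$ if and only if it is eventually $0$, that is, if and only if $\sigma^N(a)$ contains no letter $d$ with $|\phi(d)|>0$ for some $N\geq 1$. By (P2) the set of letters occurring in $\sigma^j(a)$ is the same for every $j\geq 1$, so this happens if and only if every letter occurring in $\sigma(a)$ is erasing for $\phi$, which is obviously decidable.

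Next I would establish that, for every letter $c$, the sequence $(|\phi(\sigma^n(c))|)_n$ is either bounded or tends to $+\infty$, and that which case occurs is decidable. Using Proposition \ref{prop:decomprim} together with (P1), $A$ is partitioned into primitive components $A_1,\dots,A_l$ and $\sigma(d)\in\bigl(\bigcup_{j\geq i}A_j\bigr)^*$ whenever $d\in A_i$, which induces a ``downstream'' reachability preorder on the components, all of it computable. Call $A_i$ a \emph{growing component} if $M_i$ has positive entries and $M_i\neq[1]$ (equivalently, by primitivity, the letters of $A_i$ are growing, which is decidable by Corollary \ref{coro:deciderase}), and a \emph{unit component} if $M_i=[1]$ (the remaining case being $M_i=0$). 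The key combinatorial fact is that $(|\phi(\sigma^n(c))|)_n\to+\infty$ if and only if there is a letter $d$ with $|\phi(d)|>0$ such that, along some chain of components running downstream from the component of $c$ to that of $d$, there occurs either a growing component or at least two unit components; otherwise the sequence is bounded. Indeed $|\phi(\sigma^n(c))|=\sum_{d:\,|\phi(d)|>0}(M_\sigma^n)_{d,c}$, and, by Perron--Frobenius in Frobenius normal form (as in Sections 4.4 and 4.5 of \cite{Lind&Marcus:1995}), each entry $(M_\sigma^n)_{d,c}$ either stays bounded or behaves like $n^k\lambda^n$ with $\lambda\geq1$, $k\geq0$ and $(\lambda,k)\neq(1,0)$; in the latter case it tends to $+\infty$, the alternative ``bounded versus $\lambda>1$'' corresponding to the presence of a growing component on a chain from $c$ to $d$, and ``$k=0$ versus $k\geq1$'' to the presence of at least two unit components on such a chain. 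All these are finite graph-reachability checks on the (computable) component partition, hence decidable.

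Finally, applying this with $c=a$ decides \eqref{lemma:phisigmainfty}, and the dichotomy just established yields the last assertion: if $(|\phi(\sigma^n(a))|)_n$ does not tend to $+\infty$, then it is bounded. The only delicate point I expect is the Perron--Frobenius growth estimate for a reducible non-negative matrix in Frobenius normal form — in particular the claim that an unbounded such sequence cannot oscillate but must go to $+\infty$ — and this is exactly where (P1) is genuinely needed, since the primitivity of the diagonal blocks rules out the periodic behaviour that would otherwise destroy the dichotomy; the rest is bookkeeping with the component graph.
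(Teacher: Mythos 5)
Your proof is correct; your treatment of item \eqref{lemma:tendstozero} is essentially identical to the paper's, but for item \eqref{lemma:phisigmainfty} you follow a genuinely different route. The paper never touches growth asymptotics of reducible nonnegative matrices: it argues by a finite bottom-up induction over the components of Proposition \ref{prop:decomprim}, starting with $A_l$ and, at each step, splitting on whether the diagonal block of the component containing $a$ is the null matrix, the matrix $[1]$, or a larger positive block; the unit case is reduced to item \eqref{lemma:tendstozero} via (P2), and the dichotomy ``bounded or tends to infinity'' is carried along as the induction invariant, which also yields the final assertion. You instead write $|\phi(\sigma^n(a))|=\sum_{d}|\phi(d)|\,(M_{\sigma}^n)_{d,a}$ and invoke the classical growth estimates for matrices in Frobenius normal form, reducing the question to reachability in the component graph: unbounded if and only if some downstream chain from the component of $a$ to a letter $d$ with $|\phi(d)|>0$ contains a block of spectral radius greater than $1$ or at least two blocks equal to $[1]$. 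This criterion is right, and the oscillation issue you flag is indeed settled by (P1): the diagonal blocks are positive or zero, so every nontrivial communicating class is aperiodic and each entry $(M_{\sigma}^n)_{d,a}$ is either bounded or tends to infinity. Your approach buys an explicit closed-form characterization and a cheap, purely graph-theoretic test once the partition of Proposition \ref{prop:decomprim} is computed; its cost is the reliance on Perron--Frobenius growth theory for reducible matrices, which the paper's elementary induction avoids entirely. Two small repairs: the parenthetical equivalence between ``$M_i$ positive and $M_i\neq[1]$'' and ``the letters of $A_i$ are growing'' is false for non-principal components (take $\sigma(b)=bc$, $\sigma(c)=cc$: then $b$ is growing although its block is $[1]$), so the detection of such blocks should not be routed through Corollary \ref{coro:deciderase}; it is read off directly from the computed matrix $M^p$, which is all you need. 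Likewise, when the maximal spectral radius along a chain exceeds $1$, the polynomial exponent counts the blocks attaining that radius rather than the unit blocks, so the dichotomy is best stated exactly as the reachability criterion above rather than by attaching the exponent to unit components.
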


\begin{proof}
Let $A'$ be the set of letters occurring in $\sigma(a)$.
We prove the decidability of \eqref{lemma:tendstozero}.
From (P2), for all $n\geq 1$, the set of letters occurring in $\sigma^n (a)$ is $A'$.
Then, $(|\phi (\sigma^n (a))|)_n$ tends to $0$ if and only if $\phi (a')$ is the empty word for all $a'\in A'$.

We prove the decidability of \eqref{lemma:phisigmainfty}.
Let us consider the notation of Proposition \ref{prop:decomprim} for $\sigma$.
As $\sigma$ satisfies (P1) we can suppose $p=1$.

Suppose $a$ belongs to $A_l$.
Then $(|\phi (\sigma^n (a))|)_n$ tends to infinity if and only if $M_l$ is neither the $1\times 1$-matrix $[1]$ nor the null matrix, and, there exists a letter $b\in A_l$ such that $\phi (b)$ is not the empty word.
Thus for such a letter the problem is decidable.
Moreover, if $(|\phi (\sigma^n (a))|)_n$ does not tend to infinity, then it is bounded.

Now we proceed by a finite induction. 
Suppose the problem is decidable for all letters in $\cup_{n+1\leq j\leq l}A_j$. 
We show it is decidable for all letters in $\cup_{n\leq j\leq l}A_j$.
 
Suppose $a$ belongs to $A_n$.
If $M_n$ is the null matrix, then we conclude with our induction hypothesis.

Suppose $M_n $ is the $1\times 1$-matrix $[1]$.
Then, $(|\phi (\sigma^n (a))|)_n$ tends to infinity if and only if there is a letter $a'$ in $A'\setminus \{ a\}$ such that $(|\phi (\sigma^n (a'))|)_n$ does not tend to zero.
Hence the decidability is deduced from \eqref{lemma:tendstozero}.
Moreover, if $(|\phi (\sigma^n (a))|)_n$ does not tend to infinity, then it is bounded.

Suppose $M_n $ is neither the $1\times 1$-matrix $[1]$ nor the null matrix.
Then, $(|\phi (\sigma^n (a))|)_n$ tends to infinity if and only if there exists a letter in $A'$ such that $\phi (a')$ is not empty.
Moreover, if $(|\phi (\sigma^n (a))|)_n$ does not tend to infinity, then it goes to $0$ and thus is bounded.
\end{proof}

Let us conclude with the HD$0$L ultimate periodicity problem.

Let us first suppose that $\sigma$ satisfies (P1) and (P2).

Let $w = w_0\cdots w_{|w|-1}$ where the $w_i$'s belong to $A$.
As we want to test the ultimate periodicity, from Lemma \ref{lemma:tendsto}, we can suppose $(|\phi (\sigma^n (w_0))|)_n$ tends to infinity.
Consequently we can suppose $w=w_0$.
We set $a=w_0$.

Let $j_0$ be the smallest integer less or equal to $|A|+1$ such that $\sigma^{j_0} (a)$ and  $\sigma^{j_0+n_0} (a)$ start with the same first letter for some $n_0$ verifying $j_0+n_0\leq |A|+1$. 
Such integers exist from the pigeon hole principle.  
We also assume $n_0$ is the smallest such integer.
Let $a_i$ be the first letter of $\sigma^{j_0+i} (a)$, $0\leq i\leq n_0-1$.
Notice that if $(|\phi (\sigma^{j_0+i +kn_0} (a_i))|)_k $ tends to infinity then $(\phi (\sigma^{j_0+i +kn_0} (a_i)))_k$ converges in $B^\mathbb{N}$.
From Lemma \ref{lemma:tendsto} it is decidable to know whether $(|\phi (\sigma^{j_0+i +kn_0} (a_i))|)_k $ tends to infinity. 
Let $\Lambda$ be the set of such $a_i$'s.
Then, the set of accumulation points in $B^\mathbb{N}$ of $(\phi (\sigma^n (w)))_n$ is computable: it is the set of the infinite sequences $\lim_{k\to +\infty } \phi (\sigma^{j_0+i +kn_0} (a_i))$ where $a_i$ belongs to $\Lambda$.

Consequently, $(\phi (\sigma^n (w)))_n$ converges to an ultimately periodic sequence if and only if there exist $u,v\in B^*$ such that for all $0\leq i\leq n_0-1$, $\lim_{k\to +\infty } \phi (\sigma^{j_0+i +kn_0} (a_i))=uv^\omega$.
Thus to decide whether $(\phi (\sigma^n (w)))_n$ converges to an ultimately periodic sequence, we first have to check (using Theorem \ref{theo:perforsub}) that for all $0\leq i\leq n_0-1$, $\lim_{k\to +\infty } \phi (\sigma^{j_0+i +kn_0} (a_i))=u_iv_i^\omega$, for some computable  $u_i,v_i \in B^*$.
Then, we check whether the sequences $u_iv_i^\omega$ are equal (which can be algorithmically realized).

\medskip

Let then $\sigma$ be an arbitrary morphism. 
From Proposition \ref{prop:decomprim} we can suppose that $\sigma^p$ satisfies (P1) and (P2) for some computable $p>0$.
Then, we proceed as before for the couples $(\sigma^p , \phi \circ \sigma^i )$, $0\leq i\leq p-1$: We test their ultimate periodicity and then we compare the results to finally decide.

\medskip

{\bf Acknowledgements.}
The author thanks the referees for their valuable and relevant comments.
He also thanks the ANR program SubTile for its financial support.

\bibliographystyle{new2}
\bibliography{periodicite}

\end{document}